\providecommand{\U}[1]{\protect \rule{.1in}{.1in}}
\theoremstyle{change}
\newtheorem{definition}{Definition:}[section]
\newtheorem{proposition}[definition]{Proposition:}
\newtheorem{theorem}[definition]{Theorem:}
\newtheorem{lemma}[definition]{Lemma:}
\newtheorem{corollary}[definition]{Corollary:}
{\theorembodyfont{\rmfamily}
	\newtheorem{remark}[definition]{Remark:}
}
{\theorembodyfont{\rmfamily}
	
}
\newenvironment{proof}
{{\bf Proof:}}
{\qquad \hspace*{\fill} $\Box$}
\newcommand{\fg}{\mathfrak{g}}
\newcommand{\fn}{\mathfrak{n}}
\newcommand{\fh}{\mathfrak{h}}
\newcommand{\fu}{\mathfrak{u}}
\newcommand{\Ad}{\operatorname{Ad}}
\newcommand{\ad}{\operatorname{ad}}
\newcommand{\id}{\operatorname{id}}
\newcommand{\inner}{\operatorname{int}}
\newcommand{\rme}{\mathrm{e}}
\newcommand{\ZC}{\mathcal{Z}}
\newcommand{\UC}{\mathcal{U}}
\newcommand{\XC}{\mathcal{X}}
\newcommand{\DC}{\mathcal{D}}
\newcommand{\C}{\mathbb{C}}
\newcommand{\N}{\mathbb{N}}
\newcommand{\R}{\mathbb{R}}
\begin{document}

\title{The chain control set of a linear control system}
\author{Adriano Da Silva\thanks{Supported by Proyecto UTA Mayor Nº 4768-23} \\
	Departamento de Matem\'atica,\\Universidad de Tarapac\'a - Sede Iquique, Chile.}
\date{\today}
\maketitle

\begin{abstract}
	In this paper, we analyze the chain control sets of linear control systems on connected Lie groups. Our main result shows that the compactness of the central subgroup associated with the drift is a necessary and sufficient condition to assure the uniqueness and compactness of the chain control set.
\end{abstract}

 {\small {\bf Keywords:} Linear control systems, chain control sets}
	
	{\small {\bf Mathematics Subject Classification (2020):} 93C05, 22E25}%

\section{Introduction}

The concept of linear control systems appeared, first in \cite{Markus} for matrix groups and subsequently in the works \cite{San, AyTi} for general connected groups, as a natural extension of their counterpart on Euclidean
spaces. Several works address the main issues in control theory for such systems, such as controllability,
observability and optimization appeared over the years, mainly through the results in \cite{JPh1} showing that such generalization is also important for the classification of general control systems on abstract connected manifolds.

In the present paper, we analyze the chain control sets of a linear control system. Understanding chain control sets is vital, since they are intrinsically connected with the dynamics of the system. Chain control sets are closely related to the notion of chain transitive components in the theory of dynamical systems. They contain fixed, periodic, and recurrent points, as well as the maximal regions of controllability, the so-called control sets (see \cite[Chapter 3]{FCWK}). For the class of linear control systems, the dynamics of the drift strongly influence the behavior of the whole system. In fact, several properties concerning controllability and control sets were determined by subgroups induced by the drift (see \cite{DSAy1, DSAyGZ, DS} and references therein). For chain control sets, it is not different; under the assumption that the central subgroup of the drift is compact, the boundedness of such a set only depends on an induced (hyperbolic) system on the nilradical of the group. Since the behavior of the nilpotent part is polynomial, the hyperbolic part can be used to control the growth of the chains, and hence one can show the boundedness of the chain control set. Moreover, compactness also implies that the central subgroup is contained in any chain control set, and hence, there is a unique chain control set in this case.

The paper is structured as follows: In Section 2 we introduce linear vector fields and the decompositions induced by them on the group and the algebra. Linear control systems are also introduced in this section, as are their chain control sets. Some structural results concerning the existence of homomorphisms between groups and conjugations are also proved here. In Section 3 we prove our main results. The first subsection is used to prove the uniqueness of the chain control set. The compactness of the central subgroup implies that one can always construct controlled chains from arbitrary chain control sets to the central subgroup and vice versa. In the second subsection, a detailed analysis of a linear control system on a semi-direct product is done. Although the analysis is quite technical, it is central to obtaining the bounds of the chain control set. The central idea is to look at the noncompact component, which lives on the nilpotent part of the group, and use hyperbolicity in this component to show the boundedness. As a consequence of the previous, in the third and last subsection, we can conclude that any linear control system with a compact central subgroup has a unique, compact chain control set. 

\subsection*{Notations}

Let $G$ be a connected Lie group with Lie algebra $\fg$. For any element $g\in G$ the left and right translations of $G$ by $g$ are denoted by $L_g$ and $R_g$, respectively. The conjugation $C_g$ is the map $C_g=R_{g^{-1}}\circ L_g$. The set of automorphisms of $G$ is denoted by $\mathrm{Aut}(G)$  by $\mathrm{Aut}(\fg)$ we denote the set of automorphisms of $\fg$. The adjoint map $\Ad:G\rightarrow \mathrm{Aut}(\fg)$ is given by the differential $\Ad(g):=(dC_g)_e$, where $e\in G$ stands for the identity element of $G$. For a connected Lie group and a homomorphism $\rho:G\rightarrow\mathrm{Aut}(H)$, the semi-direct product of $G$ and $H$ is the Lie group $G\times_{\rho} H$ whose subjacent manifold is $G\times H$ and the product is given by 
$$(g_1, h_1)(g_2, h_2):=(g_1g_2, h_1\rho(g_1)h_2).$$
Its Lie algebra coincides, as a vector space, to the Cartesian product $\fg\times\fh$. The  {\it Baker-Campbell-Hausdorff (BCH)} formula of two elements $X, Y\in \fg$ is given by
$$\exp(X)\exp(Y)=\exp(c(X, Y)),$$
where $\exp:\fg\rightarrow G$ stands for the exponential map and $c(X, Y)$ is a series depending on $X, Y$ and its brackets. Its first terms are given by
$$c(X, Y)=X+Y+\frac{1}{2}[X, Y]+\frac{1}{12}\left([X, [X, Y]]+[Y, [Y, X]]\right)-\frac{1}{24}[Y, [X, [X, Y]]]+\cdots,$$
and the subsequent terms depend on the brackets of five or more elements. The serie is finite when $\fg$ is a nilpotent Lie algebra. In the nilpotent case, we can endow $\fg$ with the product $(X, Y)\in \fg\times\fg\rightarrow X*Y=c(X, Y)$ such that $(\fg, *)$ is, up to isomorphisms, the simple connected, connected nilpotent Lie group with Lie algebra $\fg$.

\section{Preliminaries}

This section is devoted to providing the background needed to understand the main results presented in the paper. 

\subsection{Decompositions induced by linear vector fields}

Let $G$ a connected Lie group with Lie algebra $\fg$. We say that a vector field $\mathcal{X}$ on $G$ is {\it linear} if its associated flow 
$\{\varphi_t\}_{t\in \R}$ is a $1$-parameter subgroup of $\mathrm{Aut}(G)$. Any linear vector field $\mathcal{X}$ induces a derivation $\mathcal{D}$ of the Lie algebra $\mathfrak{g}$ that satisfy 
\begin{equation*}
(d\varphi_{t})_{e}=\mathrm{e}^{t\mathcal{D}}\; \; \; \mbox{ for all }\; \;
\;t\in \mathbb{R}. \label{derivativeonorigin}
\end{equation*}%
In particular, it holds that
\begin{equation*}
\varphi _{t}(\exp Y)=\exp (\mathrm{e}^{t\mathcal{D}}Y),\mbox{ for all }t\in 
\mathbb{R},Y\in \mathfrak{g}.
\end{equation*}

Let $\alpha\in\C$ an eigenvalue of $\DC$ and consider the real generalized eigenspaces of $\DC$ given by  
$$
\mathfrak{g}_{\alpha}=\{X\in \mathfrak{g}:(\mathcal{D}-\alpha I)^{n}X=0\;\;\mbox{for some }n\geq 1\}, \;\;\mbox{ if }\;\;\alpha\in\R \;\;\mbox{ and}
$$
$$\mathfrak{g}_{\alpha}=\mathrm{span}\{\mathrm{Re}(v), \mathrm{Im}(v);\;\;v\in \bar{\fg}_{\alpha}\},\;\;\mbox{ if }\;\;\alpha\in\C$$
where $\bar{\fg}=\fg+i\fg$ is the complexification of $\fg$ and $\bar{\fg}_{\alpha}$ the generalized eigenspace of the extension of $\DC$ to $\bar{\fg}$ given by $\bar{\DC}=\DC+i\DC$. 

Since $[\bar{\fg}_{\alpha },\bar{\fg}_{\beta }]\subset 
\bar{\fg}_{\alpha +\beta }$ when $\alpha +\beta $ is an eigenvalue of $\mathcal{D}$ and zero otherwise (see \cite[Proposition 3.1]{SM1}), we get for the real case that
$$[\fg_{\lambda_1}, \fg_{\lambda_1}]\subset \fg_{\lambda_1+\lambda_2}\;\;\;\mbox{ when }\lambda_1+\lambda_2=\mathrm{Re}(\alpha)\;\mbox{ for some eigenvalue }\alpha\;\mbox{ of }\;\DC\;\mbox{ and zero otherwise},$$
where $\fg_{\lambda}:=\bigoplus_{\alpha; \mathrm{Re}(\alpha)=\lambda}\fg_{\alpha}$, with $\fg_{\lambda}=\{0\}$ if $\lambda\in\R$ is not the real part of any eigenvalue of $\DC$. 

The {\it unstable, central, }and {\it stable} subalgebras of $\fg$ are given, respectively, by
\begin{equation*}
\mathfrak{g}^{+}=\bigoplus_{\alpha :\, \mathrm{Re}(\alpha)>	0}\mathfrak{g}_{\alpha },\hspace{1cm}\mathfrak{g}^{0}=\bigoplus_{\alpha :\,%
	\mathrm{Re}(\alpha )=0}\mathfrak{g}_{\alpha }\hspace{1cm}%
\mbox{ and }\hspace{1cm}\mathfrak{g}^{-}=\bigoplus_{\alpha :\, \mathrm{Re}%
	(\alpha )<0}\mathfrak{g}_{\alpha }.
\end{equation*}

It is straightforward to see that  $\mathfrak{g}=\mathfrak{g}^{+}\oplus \mathfrak{g}^{0}\oplus \mathfrak{g}^{-}$. Moreover, $\mathfrak{g}^{+},\mathfrak{g}^{0},\mathfrak{g}^{-}$ are $\DC$-invariant Lie subalgebras with $\mathfrak{g}^{+}$, $\mathfrak{g}^{-}$ nilpotent ones. The derivation is called {\it hyperbolic} if $\fg=\fg^{+, -}:=\fg^+\oplus\fg^-$, that is, if $\DC$ has only eigenvalues with nonzero real parts.

Integration of the previous subalgebras to $G$ gives rise to the connected subgroups $G^{+}$, $G^{-}$, $G^{0}$, $G^{+,0},$ and $G^{-,0}$ whose Lie algebras are given by $\mathfrak{g}^{+}$, $%
\mathfrak{g}^{-}$, $\mathfrak{g}^{0}$, $\mathfrak{g}^{+,0}:=\mathfrak{g}%
^{+}\oplus \mathfrak{g}^{0}$ and $\mathfrak{g}^{-,0}:=\mathfrak{g}^{-}\oplus 
\mathfrak{g}^{0}$, respectively. By Proposition 2.9 of \cite{DS}, all the above subgroups are $\varphi_t$-invariant, closed, and their intersection are trivial, that is, 
$$G^+\cap G^-=G^+\cap G^{-, 0}=\ldots=\{e\}.$$
Moreover, $G^+$ and $G^-$ are simply connected, connected nilpotent Lie groups. We call the subgroups $G^+, G^0$ and $G^-$ the {\it unstable, central,} and {\it stable subgroups} of $\XC$, respectively. We also denote by $G^{+, -}$ the product of $G^+$ and $G^-$, that is, $G^{+, -}=G^+G^-$.

The group $G$ is said to be {\it decomposable} if,  
$$G=G^{+, 0}G^-=G^{-, 0}G^+=G^{+, -}G^0.$$ 
If $G^0$ is a compact subgroup, then $G$ is decomposable (see \cite[Proposition 3.3]{DSAyGZ}). 

\begin{remark}
	Let us note that, on decomposable groups, any element can be written, uniquely, as a product of elements in the stable, central and unstable subgroups.
\end{remark}

The next result shows that when $G^0$ is a compact Lie group, the linear vector field $\XC$ can be projected onto a semidirect product. It will be important to reduce our general case to a particular one.

\begin{lemma}
	\label{conj}
Let $\XC$ be a linear vector field with a compact central subgroup $G^0$ on $G$. Then, there exists a compact subgroup $H$, acting by automorphisms on a simply connected, connected nilpotent Lie group $\fu$ and a homomorphism 
 $$\psi:G\rightarrow H\times_{\rho}\fu,$$ 
 that conjugates $\XC$ to a linear vector field on $H\times_{\rho}\fu$ of the form $\widehat{\XC}\times\widehat{\DC}$ such that $\widehat{\DC}$ is hyperbolic.
\end{lemma}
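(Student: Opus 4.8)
The plan is to realize the target semidirect product as an explicit quotient of $G$ by a central subgroup, with $\psi$ the canonical projection. The guiding idea is that compactness of $G^0$ should force all the ``bad'' central interactions between the stable and unstable directions to lie in the centre of $\fg$; once these are quotiented out, the hyperbolic directions assemble into a nilpotent ideal that is complemented by the (now purely compact) image of $\fg^0$, giving precisely a semidirect product $H\times_\rho\fu$ on which $\DC$ descends in block form.

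Concretely, I would first record the two structural facts on which everything rests. The first is that, since $G^0$ is compact, the hyperbolic part $\fg^+\oplus\fg^-$ is contained in the nilradical $\fn$ of $\fg$ (the ``hyperbolic system lives on the nilradical'' phenomenon alluded to in the introduction); in particular $[\fg^+,\fg^-]\subseteq\fn$. The second, which is the crux, is that $\fn^0:=\fn\cap\fg^0$ is central in $\fg$. To see this, fix $Z\in\fn^0$. As $Z\in\fn$ the operator $\ad(Z)$ is nilpotent on $\fg$. On the other hand $G^0$ normalises $\fg^+$ (because $[\fg^0,\fg^+]\subseteq\fg^+$), so $\Ad(G^0)|_{\fg^+}$ is a compact subgroup of $GL(\fg^+)$ and hence preserves an inner product; therefore every $\ad(Z)|_{\fg^+}$ with $Z\in\fg^0$ is skew-symmetric, i.e. semisimple with purely imaginary eigenvalues. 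A simultaneously nilpotent and semisimple operator is zero, so $\ad(Z)|_{\fg^+}=0$, and symmetrically $\ad(Z)|_{\fg^-}=0$; finally $\ad(Z)|_{\fg^0}=0$ because $\fn^0$ is a nilpotent ideal of the reductive algebra $\fg^0$ and hence lies in its centre. Thus $\ad(Z)=0$, proving $\fn^0\subseteq\fz(\fg)$, and in particular the $\fg^0$-component of $[\fg^+,\fg^-]$ (which sits in $\fn\cap\fg^0=\fn^0$) is central.

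With these in hand, let $T:=\overline{\exp\fn^0}$, a closed central toral subgroup contained in $G^0$, set $\bar G:=G/T$ and $\bar\fg:=\fg/\fz_0$ with $\fz_0:=\mathrm{Lie}(T)\subseteq\fz(\fg)\cap\fg^0$. Writing bars for images, the quotient kills exactly the central component of $[\fg^+,\fg^-]$, so $\fu:=\bar{\fg^+}\oplus\bar{\fg^-}$ becomes a genuine nilpotent subalgebra, and in fact an ideal of $\bar\fg$ since $[\bar{\fg^0},\bar{\fg^\pm}]\subseteq\bar{\fg^\pm}$. Its complement $\bar{\fg^0}=\fg^0/\fz_0$ is a subalgebra of compact type, so $\bar\fg=\bar{\fg^0}\ltimes\fu$. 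Integrating, I would take $H$ to be the image of $G^0$ in $\bar G$, a compact group, and $U:=\exp\fu$, which is closed, simply connected and nilpotent (the hyperbolic directions contain no compact subgroup); since $H\cap U=\{e\}$ and $HU=\bar G$, one obtains $\bar G\cong H\times_\rho\fu$ with $\rho$ the conjugation action of $H$ on $U$, and $\psi:G\to H\times_\rho\fu$ is the projection $G\to\bar G$.

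It then remains to check the conjugation, which is now essentially formal: $\DC$ preserves each of $\fg^0,\fg^+,\fg^-$ and $\fz_0$, so $\varphi_t$ preserves $T$ and descends to a flow $\bar\varphi_t$ on $\bar G$ whose derivation is the block map $\bar\DC|_{\bar{\fg^0}}\oplus\widehat\DC$ with $\widehat\DC:=\bar\DC|_\fu$; this is exactly a linear vector field of product form $\widehat\XC\times\widehat\DC$, and $\widehat\DC$ is hyperbolic because the eigenvalues of $\DC$ on $\fg^\pm$ have nonzero real part and are unaffected by the central quotient. By construction $\psi\circ\varphi_t=\bar\varphi_t\circ\psi$, so $\psi$ conjugates $\XC$ to $\widehat\XC\times\widehat\DC$. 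I expect the only genuine difficulty to be the centrality statement $\fn^0\subseteq\fz(\fg)$ together with the location of the hyperbolic part in $\fn$: these are precisely what turn the a priori non-subalgebra $\fg^+\oplus\fg^-$ into a nilpotent ideal after the central quotient, and everything else reduces to bookkeeping with the decomposition recalled in the excerpt.
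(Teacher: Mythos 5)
Your construction is, up to packaging, the same as the paper's: the kernel of the paper's homomorphism $\psi$ is exactly your $T=N_0=G^0\cap N$, and both proofs end up with $H=G^0/N_0$ and $\fu=\fn/\fn_0$, with hyperbolicity of $\widehat\DC$ coming from $\fn=\fg^{+,-}\oplus\fn^0$. The differences are in what is proved versus imported: you give a self-contained (and correct) argument that $\fn^0$ is central in all of $\fg$, which the paper instead takes from the cited lemma of Ayala--Da Silva, whereas you assert two points that the paper actually works for. First, the paper establishes that $N/N_0$ is simply connected by showing $\exp^{-1}(e)$ is a discrete, $\rme^{t\DC}$-invariant subset of $\fn$ and hence lies in $\ker\DC\cap\fn\subset\fn_0$; your parenthetical ``the hyperbolic directions contain no compact subgroup'' is the right idea but should be expanded (the maximal torus of $U$ is characteristic, so its Lie algebra is $\widehat\DC$-invariant and killed by hyperbolicity). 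Second, your claim $H\cap U=\{e\}$ silently uses that $G^0\cap N$ is connected (so that it equals $T$); this is again part of the cited lemma and should be acknowledged. Neither point is a wrong turn, but as written they are the only places where your argument leans on unproved assertions that the paper either proves or explicitly cites.
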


\begin{proof} Let us denote by $\fn$ the nilradical of $\fg$ and $N$ its associated connected group on $G$. Since $G^0$ is a compact subgroup, the intersection $G^0\cap N=N_0$ is a compact, connected subgroup of the center $Z(N)$ of $N$ whose Lie algebra satisfies $\fn=\fg^{+, -}\oplus\fn^0$ (see \cite[Lemma 2.1]{DSAy1}). In particular, $N_0$ is an ideal of $G$ and we can define 
$$H=G^0/N_0\hspace{.5cm}\mbox{ and }\hspace{.5cm} \fu=\fn/\fn_0,$$
where $\fn_0\subset\fn$ is the Lie algebra of $N_0$. The group $H$ is a compact Lie group and $\fu$ is a connected, simply connected nilpotent Lie group when we endow it with the product induced by the BCH formula.

Define 
$$\rho:H\rightarrow\mathrm{Aut}(\fu), \hspace{1cm}\rho(hN_0)(X+\fn_0):=\Ad(h)X+\fn_0.$$ 
It is straightforward to see that $\rho$ is well defined. Moreover, $\rho$ is continuous (and hence differentiable) since it satisfies $\rho\circ\pi_1=\pi_2\circ\Ad$, where $\pi_1:G\rightarrow G/N_0$ and $\pi_2: \fn\rightarrow\fn_0$ are the canonical projections. Therefore, the semi-direct product $H\times_{\rho}\fu$ is well defined. 

On the other hand, the compactness of $G^0$ implies that $G$ is decomposable. Therefore, any $g\in G$ is uniquely decomposed as $g=kh$, with $h\in G^0$ and $k\in G^{+, -}$.  Since $\fn$ is nilpotent, there exists  $X\in \fn$ such that $k=\rme^X$. On the other hand, the fact that $\exp:\fn\rightarrow N$ is a covering map implies that $\exp^{-1}(e)\subset\fn$ is a discrete subset. Hence
$$\rme^{t\DC}(\exp^{-1}(e))\subset \exp^{-1}(e), \;\;\;\mbox{ for all }\;\;\;t\in\R,$$
forces that $\exp^{-1}(e)\subset\ker\DC\cap\fn=\fn_0$, showing that 
$$\rme^{X_1}=\rme^{X_2}\iff -X_1*X_2\in\fn_0\iff X_1+\fn_0=X_2+\fn_0,$$
where the last equivalence is by considering the product on $\fn/\fn_0$ induced by the BCH-formula.

Define the map 
	$$\psi: g\in G\mapsto (hN_0, X+\fn_0)\in H\times_{\rho}\fu.$$
According to the previous comments, the map $\psi$ is well defined. Moreover, since the previous decompositions are continuous, $\psi$ is also continuous.  

Moreover, if $g_1=\rme^{X_1}h_1$ and $g_2=\rme^{X_2}h_2$, then
$$\rme^{X_1}h_1\rme^{X_2}h_2=\rme^{X_1}h_1\rme^{X_2}h_1^{-1}(h_1h_2)=\rme^{X_1*\Ad(h_1)X_2}h_1h_2,$$
implying that
$$\psi(g_1g_2)=\psi\left(\rme^{X_1*\Ad(h_1)X_2}h_1h_2\right)=(h_1h_2N_0, X_1*\Ad(h_1)X_2+\fn_0)$$
$$(h_1N_0h_2N_0, (X_1+\fn_0)*(\Ad(h_1)X_2+\fn_0))=(h_1N_0h_2N_0, (X_1+\fn_0)*\rho(h_1N_0)(X_2+\fn_0))$$
$$=(h_1N_0, X_1+\fn_0)(h_2N_0, X_2+\fn_0)=\psi(g_1)\psi(g_2),$$
showing that $\psi$ is a homomorphism.

To conclude, let us show the assertion concerning the induced vector field. By the previous decomposition, we have that 
$$\varphi_t(g)=\varphi_t(\rme^{X}h)=\varphi_t(\rme^{X})\varphi_t(h)=\exp(\rme^{t\DC}X)\varphi_t(h),$$
and hence, 
$$\psi(\varphi_t(g))=\left(\varphi_t(h)N_0, \rme^{t\DC}X+\fn_0\right)=(\pi_1(\varphi_t(h))), \pi_2( \rme^{t\DC|_{\fn}}X)),$$
showing that the flow on $H\times_{\rho}\fu$ induced by $\psi$ is given by 
$$(\pi_1\circ\varphi_t|_{G^0})\times (\pi_2\circ \rme^{t\DC|_{\fn}})\hspace{.5cm}\implies\hspace{.5cm}.$$
By derivation, we obtain that $\XC$ is $\psi$-conjugated to the linear vector field $\widehat{\XC}\times\widehat{\DC}$ on $H\times_{\rho}\fu$ determined by the relations 
$$\widehat{\XC}\circ\pi_1=(\pi_1)_*\circ\XC|_{G^0}\hspace{.5cm}\mbox{ and }\hspace{.5cm} \widehat{\DC}\circ\pi_2=\pi_2\circ\DC|_{\fn},$$
where the second equality follows from the fact that $\pi_2$ is a linear map. Moroever, since $\fn=\fg^{+, -}\oplus \fn^0$ we get by invariance that $\pi_2\circ\DC|_{\fn}=\pi_2\circ\DC|_{\fg^{+, -}}$, which implies that $\widehat{\DC}$ is hyperbolic and concludes the proof.
\end{proof}

\begin{remark}
    Note that the eigenvalues of $\widehat{\DC}$ are precisely the eigenvalues with a nonzero real part of $\DC$.
\end{remark}

\subsection{Linear control systems}

Let $G$ be a connected Lie group with Lie algebra $\fg$ identified with the right-invariant vector fields and $\Omega\subset\R^m$ a compact and convex subset such that $0\in\inner\Omega$. We define the {\it set of control functions} by 
$$\UC:=\{u:\R\rightarrow\R^m; \;u\;\mbox{ measurable and  }  u(t)\subset\Omega \mbox{ for a.a. }t\in \R\}.$$
A \emph{linear control system (LCS)} on $G$ is given a family of ordinary differential equations 
\begin{flalign*}
&&\dot{g}(t)=\XC(g(t))+\sum_{j=1}^mu_j(t)Y^j(g(t)),  &&\hspace{-1cm}\left(\Sigma_G\right)
\end{flalign*}
where $\XC$ is a linear vector field, $Y^1, \ldots, Y^m\in\fg$ are right-invariant vector fields and $u=(u_1, \ldots, u_m)\in\UC$. For any $g\in G$ and $u\in\UC$, the solution $t\mapsto\phi(t, g, u)$ of $\Sigma_G$ is complete and satisfies the cocycle property
$$\phi(t+s,, g, u)=\phi(t, \phi(s, g, u), \theta_su),$$
where $\theta_su:=u(\cdot+s)$ is the shift flow. By our hypothesis, the set $\UC$ is a compact metric space, the shift $\theta$ is a continuous cocycle and the map $u\in\UC\mapsto \phi(t, x, u)$ is continuous. Moreover, it holds that  
\begin{equation}
\label{prop}
\phi(t, hg, u)=L_{\phi(t, h, u)}\left(\varphi_t(g)\right)=\phi(t, h, u)\varphi_t(g), \;\;\mbox{ for any }\;\;g, h\in G.
\end{equation}

Linear control systems appear as generalizations of their counterparts for Euclidean spaces. Moreover, as shown in \cite{JPh1}, this class of system is a model for any control-affine on a general connected manifold that is transitive and generates a finite-dimensional Lie algebra. 

In order to understand the dynamics of LCSs, understanding chain control sets is vital. Chain control sets are closely related to the notion of chain transitive components in the theory of dynamical systems. They contain fixed, periodic, and recurrent points of the system and also the maximal regions of controllability, the so called control sets (for the main properties of chain control sets, the reader should consult \cite[Chapter 3 and 4]{FCWK}). 

Let $d$ be a left-invariant metric on $G$. For any $x, y\in G$ and $\varepsilon, \tau>0$, a {\it controlled $(\varepsilon, \tau)$-chain} from $x$ to $y\in G$ is given by $n\in\N$, $x_0, x_1 \ldots, x_n$ in $G$, $u_0, \ldots, u_{n-1}\in \UC$ and $T_0, \ldots, T_{n-1}\geq \tau$ with $x_0=x$ and $x_n=y$ and 
$$d(\phi(\tau_i, x_i, u_i), x_{i+1})<\varepsilon, \hspace{.5cm}\mbox{ for all }\hspace{.5cm}i=0, 1, \ldots, n-1.$$

The points $x_1, \ldots, x_{n-1}$ are called the {\it jumps} of the controlled chain, and the curves 
$$s\in [0, T_i]\mapsto\phi(s, x_i, u_i), \hspace{1cm}i=0, \ldots, n-1,$$
the {\it pieces of trajectories} of the chain. 

\begin{remark}
\label{trivialjumps}
   By adding ``trivial" jumps on a controlled $(\varepsilon, \tau)$-chain, one can assume w.l.o.g. that the times of a controlled $(\varepsilon, \tau)$-chain belong to $[\tau, 2\tau]$. 
\end{remark}

\begin{definition}
\label{chaincontrol}
	A  set $E\subset G$ is called a {\it chain control set} of the system $\Sigma_G$ if it satisfies:
 \begin{itemize}
     \item[(i)] for all $x\in E$, there exists $u\in\UC$ such that $\phi(\R, x, u)\subset E$;
     \item[(ii)] for all $x, y\in E$ and $\varepsilon, \tau>0$ there is a controlled $(\varepsilon, \tau)$-chain from $x$ to $y$;
     \item[(iii)] $E$ is maximal, with relation to set inclusion, with properties (i) and (ii). 
 \end{itemize}
\end{definition}

\begin{remark}
It is not hard to see that the previous definition is independent of the chosen left-invariant metric. In fact, the definition is the same if we choose instead a right-invariant metric. Moreover, as shown in \cite[Corollary 4.3.12]{FCWK}, chain control sets are closed.
\end{remark}

Let us denote by $E_{\varepsilon, \tau}$ the set of jumps of chains connecting points in the chain control set $E$. Since any two points in $E$ can be connected by controlled $(\varepsilon, \tau)$-chains, it holds that 
$$E\subset \bigcap_{\varepsilon, \tau}E_{\varepsilon, \tau}.$$
Moreover, the set 
$$N_{\phi}(E_{\varepsilon, \tau}):=\phi\left([0, 2\tau], E_{\varepsilon, \tau}, \UC\right).$$
contains all the pieces of trajectories of controlled $(\varepsilon, \tau)$-chains connecting points in $E$ (see Remark \ref{trivialjumps}).

\begin{remark}
	If $F$ is $\varphi$-invariant, then for any $\varepsilon, \tau>0$ we have that 
      $$x_0=\varphi_{-\tau}(g), x_1=g \hspace{.5cm}\mbox{ and }\hspace{.5cm}y_0=g, y_1=\varphi_{\tau}(g),$$
      are $(\varepsilon, \tau)$-chain from $\varphi_{-\tau}(g)\in F$ to $g$ and from $g$ to $\varphi_{\tau}(g)\in F$.
\end{remark}

Let us consider connected Lie groups $G, H$ and a surjective homomorphism $\psi:G\rightarrow H$. We say that two linear control systems $\Sigma_G$ and $\Sigma_H$ on $G$ and $H$, respectively, are $\psi$-conjugated if 
\begin{equation}
\label{solu}
\psi\left(\phi^G(t, g, u)\right)=\phi^H(t, \psi(g), u)), \;\;\;\mbox{ for any }\;\;g\in G, t\in\R\;\mbox{ and }u\in\UC.
\end{equation}

The next result analyzes the relationship between chain control sets and their image for conjugated systems.

\begin{proposition}
	\label{control}
Let $\psi$ be a conjugation between the linear control system $\Sigma_G$ and $\Sigma_H$ as previously. If $E$ is a chain control set of $\Sigma_G$, there exists a chain control set $F$ of $\Sigma_H$ such that $\psi(E)\subset F$. In particular, if $F$ and $\ker\psi$ are compact subsets, then $E$ is compact.
\end{proposition}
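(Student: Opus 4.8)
The plan is to show that the conjugation $\psi$ transports controlled chains from $G$ to $H$ with a controlled loss of precision, deduce that $\psi(E)$ inherits conditions (i) and (ii) of Definition \ref{chaincontrol}, and then invoke maximality (iii) to enclose $\psi(E)$ in a chain control set $F$. The decisive estimate is the following. Fix a controlled $(\delta,\tau)$-chain $x_0,\dots,x_n$ in $G$ with controls $u_i$ and times $T_i\ge\tau$. Applying $\psi$ and using the conjugation relation \eqref{solu}, each piece $\phi^G(T_i,x_i,u_i)$ is sent to $\phi^H(T_i,\psi(x_i),u_i)$. Since $d_G$ and $d_H$ are left-invariant and $\psi$ is a homomorphism, the resulting jump distance collapses to a distance based at the identity,
$$d_H\big(\phi^H(T_i,\psi(x_i),u_i),\,\psi(x_{i+1})\big)=d_H\big(e,\psi(k_i)\big),\qquad k_i:=\phi^G(T_i,x_i,u_i)^{-1}x_{i+1},$$
where $d_G(e,k_i)=d_G(\phi^G(T_i,x_i,u_i),x_{i+1})<\delta$. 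As $\psi$ is continuous with $\psi(e)=e$, continuity at the identity furnishes, for each $\varepsilon>0$, a single $\delta>0$ (independent of $i$) with $d_G(e,k)<\delta\Rightarrow d_H(e,\psi(k))<\varepsilon$; hence the image is a controlled $(\varepsilon,\tau)$-chain from $\psi(x_0)$ to $\psi(x_n)$.

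Granted this transport estimate, conditions (i) and (ii) for $\psi(E)$ follow directly. For (i), if $y=\psi(x)$ with $x\in E$ and $\phi^G(\R,x,u)\subset E$, then by \eqref{solu} one has $\phi^H(\R,y,u)=\psi(\phi^G(\R,x,u))\subset\psi(E)$. For (ii), given $y_1,y_2\in\psi(E)$ and $\varepsilon,\tau>0$, I would choose $\delta$ as above, connect preimages $x_1,x_2\in E$ by a controlled $(\delta,\tau)$-chain using property (ii) of $E$, and push it forward to a controlled $(\varepsilon,\tau)$-chain from $y_1$ to $y_2$. Thus $\psi(E)$ is a chain-transitive set satisfying (i) and (ii), and by the maximality property (iii) of chain control sets it is contained in a chain control set $F$ of $\Sigma_H$, giving $\psi(E)\subset F$.

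For the final assertion, I would pass to preimages: $\psi(E)\subset F$ yields $E\subset\psi^{-1}(F)$, and it remains to see that $\psi^{-1}(F)$ is compact. Being a surjective homomorphism of Lie groups, $\psi$ is a submersion, hence an open map, and therefore factors through a homeomorphism $G/\ker\psi\cong H$. Since $\ker\psi$ is compact, the quotient map $G\to G/\ker\psi$ is proper, so $\psi$ is proper and $\psi^{-1}(F)$ is compact. Because chain control sets are closed (as recorded in the preceding remark), $E$ is a closed subset of the compact set $\psi^{-1}(F)$ and is therefore compact.

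The step I expect to be the crux is the chain-transport estimate, precisely because it must be uniform over all jumps of the chain: the bound on $d_H(e,\psi(k_i))$ cannot be allowed to depend on $i$. This uniformity is exactly what the combination of the left-invariance of both metrics and the homomorphism property of $\psi$ delivers, reducing an a priori family of estimates at the various points $\phi^G(T_i,x_i,u_i)$ to a single continuity statement for $\psi$ at the identity; absent these structural features, pointwise continuity would not be enough. The properness of $\psi$ under the compact-kernel hypothesis is the remaining ingredient, but it is a standard structural fact.
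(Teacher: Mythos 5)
Your proposal is correct and follows essentially the same route as the paper: the uniform transport of $(\delta,\tau)$-chains via left-invariance of both metrics plus the homomorphism property (the paper phrases this as $\psi(B_{\delta}(e))\subset B_{\varepsilon}(e)$ translated by $\psi(x_{i+1})$, which is your identity-based reduction in ball form), followed by maximality to enclose $\psi(E)$ in $F$, and properness of $\psi$ from compactness of $\ker\psi$ to conclude. Your explicit appeal to the closedness of $E$ at the end is a small point the paper leaves implicit, but the argument is the same.
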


\begin{proof}
Let us consider $\varepsilon, \tau>0$. By continuity, there exists $\delta>0$ such that $\psi(B_{\delta}(e_G))\subset B_{\varepsilon}(e_G)$, where the metrics $d_G$ on $G$ and $d_H$ on $H$ are left-invariant. 
Let $x, y\in G$ and consider $n\in\N$, $\tau_0, \ldots, \tau_{n-1}\geq\tau$, $x=x_0, \ldots, x_n=y$, $u_0, \ldots, u_{n-1}\in\UC$ be a controlled $(\delta,\tau)$-chain from $x$ to $y$. By definition, for $i=0, \ldots, n-1$, 
$$d_G(\phi^G(\tau_i, x_i, u_i), x_{i+1})<\delta\hspace{.5cm}\iff\hspace{.5cm}\phi^G(\tau_i, x_i, u_i)\in B_{\delta}(x_{i+1})(e_G).$$
and by the invariance of the metrics,
$$\psi\left(B_{\delta}(x_{i+1})\right)=\psi\left(x_{i+1}B_{\delta}(e)\right)=\psi(x_{i+1})\psi\left(B_{\delta}(e)\right)\subset \psi(x_{i+1})B_{\varepsilon}(e)=B_{\varepsilon}(\psi(x_{i+1})),$$
implying that,
$$\phi^H(\tau_i, \psi(x_i), u_i)\stackrel{(\ref{solu})}{=}\psi(\phi^G(\tau_i, x_i, u_i))\in \psi\left(B_{\delta}(x_{i+1})\right)\subset B_{\varepsilon}(\psi(x_{i+1})).$$
Therefore, $\psi(E)$ satisfies condition (ii) in the Definition \ref{chaincontrol}. Since condition (i) is trivially satisfied on $\psi(E)$ by the conjugation property, we have that $\psi(E)$ has to be contained in a chain control set $F$ of $\Sigma_H$, showing the assertion.

On the other hand, if $\ker\psi$ is a compact subgroup, then $\psi$ is a proper map, and since 
$$E\subset \psi^{-1}\left(\psi\left(E\right)\right)\subset \psi^{-1}\left(F\right),$$
our assertion on compactness follows.
\end{proof}

\section{The main results}

Our aim in this section is to prove our main results, showing that the compactness of the central subgroup implies the uniqueness and compactness of the chain control set containing the identity.

\subsection{Uniqueness of chain control sets}

In this section, we show that if $G^0$ is compact subgroup, then $\Sigma_G$ admits a unique chain control set containing $G^0$. 

Let $H$ be a compact, connected Lie group and $\XC$ a linear vector field on $H$. Following \cite[Chapter 11]{SMGr}), if $Z(H)$ is the center of $H$ and $H'$ the derivated group, then $Z(H)$ is a connected, compact abelian Lie group, and $H'$ is a connected, compact semisimple Lie group satisfying
$H=Z(H)H'$.

Since both $Z(H)$ and $H'$ are invariant by automorphisms, the flow $\{\varphi_t\}_{t\in\R}$ restricts to a flow of automorphisms on $Z(H)$ and $H'$. As a consequence, the fact that the group automorphisms of a compact abelian Lie group is discrete implies that $\varphi_t|_{Z(H)}=\id_{Z(H)}$. 

On the other hand, the fact that any derivation on a semisimple Lie group is inner, implies that $\DC|_{\fh'}$ is inner, where $\fh'$ is the Lie algebra of $H'$. Therefore, $\DC|_{\fh'}$ is skew-symmetric for the Cartan-Killing form, and hence, for some left-invariant metric, $\varphi_t|_{H'}$ acts as a flow of isometries \cite[Theorem 2.4]{DSAyPH}. As a conclusion, there exists a left-invariant metric $d$ on $H$ such that $\varphi_t$ is a flow of isometries. In particular, for any $h\in H$ and any $\varepsilon, \tau>0$ there exists $T\geq \tau$ such that $d(\varphi_T(h), h)<\varepsilon$.

Using the previous, we can prove our main result of this section.

\begin{theorem}
\label{unique}
     If $G^0$ is a compact subgroup, then $\Sigma_G$ admits only one chain control set $E$ satisfying $G^0\subset E$.
\end{theorem}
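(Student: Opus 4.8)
The plan is to establish the two halves of the statement — existence of a chain control set containing $G^0$ and uniqueness of such a set — by first showing that $G^0$ itself is a chain transitive set, i.e. that it satisfies conditions (i) and (ii) of Definition \ref{chaincontrol}, and then using a soft maximality argument. Condition (i) is immediate: for any $x\in G^0$ the constant control $u\equiv 0\in\UC$ (admissible since $0\in\inner\Omega$) gives $\phi(t,x,0)=\varphi_t(x)$, which stays in $G^0$ by $\varphi$-invariance of the central subgroup. So the real content is condition (ii).

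For condition (ii) I would exploit the compactness and connectedness of $G^0$. Since $G^0$ is compact and connected and $\XC$ restricts to a linear vector field on it, the discussion preceding the theorem furnishes a metric making $\varphi_t|_{G^0}$ a flow of isometries, hence chain recurrent; as chain recurrence is a purely topological property, for the ambient left-invariant metric $d$ too we have, for every $h\in G^0$ and all $\varepsilon,\tau>0$, some $T\geq\tau$ with $d(\varphi_T(h),h)<\varepsilon$. Now, given $x,y\in G^0$ and $\varepsilon,\tau>0$, I would choose a path $\gamma\colon[0,1]\to G^0$ from $x$ to $y$ and, by uniform continuity, a partition $x=z_0,z_1,\dots,z_k=y$ with $d(z_i,z_{i+1})<\varepsilon/2$. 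For each $i$, chain recurrence yields $T_i\geq\tau$ with $d(\varphi_{T_i}(z_i),z_i)<\varepsilon/2$, so that $d(\phi(T_i,z_i,0),z_{i+1})\leq d(\varphi_{T_i}(z_i),z_i)+d(z_i,z_{i+1})<\varepsilon$. The points $z_0,\dots,z_k$, the controls $u_i\equiv 0$ and the times $T_i\geq\tau$ then form a controlled $(\varepsilon,\tau)$-chain from $x$ to $y$. Hence $G^0$ is chain transitive, and by the standard maximality argument (every set satisfying (i) and (ii) is contained in a maximal such set, cf. \cite{FCWK}) it is contained in some chain control set $E$, which gives existence.

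For uniqueness I would use the elementary fact that two distinct chain control sets are disjoint. Indeed, if $E_1$ and $E_2$ both satisfy (i) and (ii) and share a point $p$, then $E_1\cup E_2$ again satisfies (i) (each point keeps its controlled trajectory inside its own $E_j$) and (ii) (chains between a point of $E_1$ and a point of $E_2$ are obtained by concatenation through $p$, in both directions), so maximality forces $E_1=E_2$. Since any two chain control sets containing $G^0$ share, for instance, the identity $e\in G^0$, they must coincide. Combined with existence, this yields exactly one chain control set with $G^0\subset E$.

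The main obstacle is condition (ii): one must upgrade the merely recurrent (isometric) behaviour of $\varphi_t$ on $G^0$ into genuine transitivity between arbitrary points. The key observation resolving this is that in a controlled chain the $\varepsilon$-jumps are free, so connectedness allows one to walk along a finely discretized path while chain recurrence allows each trajectory piece of length $\geq\tau$ to return within $\varepsilon/2$ of its starting point; notably, no use of the control directions $Y^j$ is needed inside $G^0$. The only delicate point is that the recurrence estimate is first produced in the isometry-adapted metric on $G^0$, but since $G^0$ is compact and recurrence is topological, it transfers without loss to the ambient metric $d$ used to define chains.
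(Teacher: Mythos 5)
Your existence half coincides with the paper's argument: condition (i) via the zero control and $\varphi$-invariance, condition (ii) via the isometric recurrence of $\varphi_t$ on the compact group $G^0$ (transferred to the ambient metric) combined with path-connectedness and a fine discretization of a path from $x$ to $y$. That part is fine.

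The uniqueness half has a genuine gap. Your argument only shows that two chain control sets which \emph{both already contain} $G^0$ must coincide --- but that is an immediate consequence of the standard fact that distinct chain control sets are disjoint, and it uses no property of $G^0$ at all. The substantive content of the theorem, and what the paper actually proves (and what the final compactness theorem relies on), is that \emph{every} chain control set $\widehat{E}$ of $\Sigma_G$ equals the set $E$ containing $G^0$; a priori there could be other chain control sets disjoint from $G^0$, and your disjointness argument says nothing about them. The missing idea is the construction, for arbitrary $\varepsilon,\tau>0$, of controlled $(\varepsilon,\tau)$-chains from $G^0$ to an arbitrary chain control set $\widehat{E}$ and back. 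The paper does this using that compactness of $G^0$ makes $G$ decomposable, so a point $x\in\widehat{E}$ with $\phi(\R,x,u)\subset\widehat{E}$ can be written as $x=gkh$ with $g\in G^+$, $k\in G^0$, $h\in G^-$; then $d(\phi(t,x,u),\phi(t,gk,u))=d(\varphi_t(h),e)\to 0$ lets one jump from the trajectory of $gk$ onto the trajectory of $x$ (landing in $\widehat{E}$) for large $t$, while $\varphi_{-t}(g)\to e$ together with compactness of $G^0$ lets one jump from $\varphi_{-t}(gk)$ to a point of $G^0$; the reverse direction uses the symmetric decomposition $x=abc$ with $a\in G^-$, $c\in G^+$ and negative times. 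Without this step (or some substitute argument showing every chain control set is chained to $G^0$), the uniqueness assertion as the paper uses it is not established.
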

   
\begin{proof} Let us start by proving that $G^0$ is contained in a chain control set.

Since $G^0$ is invariant by the flow of $\XC$, it certainly satisfies condition (i) in Definition \ref{chaincontrol} for the trivial control. Therefore, we only have to show that any two points on $G^0$ can be connected by a controlled $(\varepsilon, \tau)$-chain.

Let $d$ be a left-invariant metric on $G$. Since $G^0$ is compact, any two metrics on $G^0$ are equivalent. Therefore, by the previous discussion, for any $g\in G^0$ and any $\varepsilon, \tau>0$, there exists $T\geq\tau$ such that $d(\varphi_T(g), g)<\varepsilon$. 

Let $x, y\in G^0$. Since $G^0$ is path connected, there exists a curve $\gamma$ connecting $x$ to $y$. Cover the compact $\mathrm{Im}(\gamma)$ with a finite number of $\varepsilon/4$-balls, with centers $x_0=x, x_1, \ldots, x_n=y$ satisfying $d(x_i, x_{i+1})<\varepsilon/2$. Consider $T_i\geq\tau$ the times satisfying $d(\varphi_{T_i}(x_i), x_i)<\varepsilon/2$. For $u_i\equiv 0$, we get that
$$d(\phi(T_i, x_i, u_i), x_{i+1})=d(\varphi_{T_i}(x_i), x_{i+1})\leq d(\varphi_{T_i}(x_i), x_i)+d(x_i, x_{i+1})<\varepsilon,$$
showing that $G^0$ satisfies condition (ii) of Definition \ref{chaincontrol}. As a consequence, there exists a chain control set $E$ such that $G^0\subset E$.
\bigskip

Let $\widehat{E}$ be also a chain control set for $\Sigma_G$. The union $E\cup\widehat{E}$ certainly satisfies condition (i) in the Definition \ref{chaincontrol}. Therefore, if we show that it also satisfies condition (ii) of the same definition, maximality implies that $E=\widehat{E}$. In order to show the former, it is enough to construct, for any $\varepsilon, \tau>0$, controlled $(\varepsilon, \tau)$-chains from $G^0$ to $\widehat{E}$ and from $\widehat{E}$ to $G^0$.

Let then be $\varepsilon, \tau>0$ and $x\in\widehat{E}$. By condition (i) in Definition \ref{chaincontrol} there exists $u\in\UC$ such that $\phi(\R, x, u)\subset \widehat{E}$. 

By the compactness of $G^0$, we can decompose (uniquely) $x=gkh$ with $g\in G^+$, $k\in G^0$ and $h\in G^-$. Using the left-invariance of the metric, we get that 
$$d(\phi(t, x, u), \phi(t, gk, u))\stackrel{(\ref{prop})}{=}d(\phi(t, gk, u)\varphi_t(h), \phi(t, gk, u))=d(\varphi_t(h), e)\rightarrow 0, \hspace{.5cm}t\rightarrow +\infty,$$
and hence, 
$$\exists t^*\geq\tau, \hspace{.5cm}d(\phi(t^*, x, u), \phi(t^*, gk, u))<\varepsilon.$$ 

On the other hand, the compactness of $G^0$ and the fact that $\varphi_{-t}(g)\rightarrow 0$ as $t\rightarrow+\infty$, implies that 
$$\exists t^{**}\geq\tau, k'\in G^0 \hspace{.5cm}d(\varphi_{-t^{**}}(gk), k')<\varepsilon.$$ 

Therefore, the points
$x_0=\varphi_{-\tau}(k')\in G^0, x_1=\varphi_{-t^{**}}(gk), x_2=gk, x_3=\phi(t^*, x, u)$, the control functions $u_0=u_1\equiv 0, u_2=u$ and the times $T_0=\tau, T_1=t^{**}, T_2=t^*$ form a controlled $(\varepsilon, \tau)$-chain from a point in $G^0$ to a point in $\widehat{E}$. 

Analogously, let us consider the decomposition $x=abc$, with $a\in G^-$, $b\in G^0$ and $c\in G^+$. Since
$$d(\phi(-t, x, u), \phi(-t, ab, u))\stackrel{(\ref{prop})}{=}d(\phi(-t, ab, u)\varphi_{-t}(c), \phi(-t, ab, u))=d(\varphi_{-t}(c), e)\rightarrow 0, \hspace{.5cm}t\rightarrow +\infty,$$
we get that
$$\exists t'\geq\tau, \hspace{.5cm}d(\phi(-t', x, u), \phi(-t', ab, u))<\varepsilon.$$ 
Using that $\varphi_{t}(a)\rightarrow 0$ as $t\rightarrow+\infty$ and that $G^0$ is a compact subgroup, assures that 
$$\exists t^{''}\geq\tau, b'\in G^0 \hspace{.5cm}d(\varphi_{-t^{''}}(ab), b')<\varepsilon.$$ 
Therefore, by the cocycle property, the points,
$x_0=\phi(-(t'+\tau), x, u)\in \widehat{E}, x_1=\phi(-t', ab, u), x_2=ab, x_3=b'$, the control functions $u_0=\theta_{-(t'+\tau)}u, u_1=\theta_{-t'}u, u_2\equiv 0$ and the times $T_0=\tau, T_1=t', T_2=t^{''}$ form a controlled $(\varepsilon, \tau)$-chain from a point in $\widehat{E}$ to a point in $G^0$, concluding the proof.
\end{proof}

\subsection{LCSs on a semi-direct product}

Let $\fu$ be a nilpotent Lie algebra and consider the connected, simply connected Lie group $(\fu, *)$, where
$$X*Y=c(X, Y), \;\; X, Y\in\fu.$$
This identification between the subjacent manifold of $(\fu, *)$ with $\fu$, allows us to work indistinctly with their elements. However, in order to make a distinction between them, we will use capital letters $X, Y, Z, \ldots$ for the elements in  the Lie algebra $(\fu, [\cdot, \cdot])$ and small letters $x, y, z, \ldots$ for the elements of the Lie group $(\fu, [\cdot, \cdot])$. Moreover, we will denote both, group and algebra, only by $\fu$.

Under the previous identification, the group of automorphisms of $\fu$ as a Lie group is given by $\mathrm{Aut}(\fu)$ and a linear vector field on $\fu$ coincides with its associated derivation. 

Let $H$ be a connected Lie group and $\rho:H\rightarrow\mathrm{Aut}(\fu)$ a continuous map, and define the semi-direct product $H\times_{\rho}\fu$. An arbitrary  right-invariant vector field on $H\times_{\rho}\fu$ has the expression
$$
(Y, Z)(h, x)=\left(Y(g), (\rho(h)Z)(x)\right), \;\mbox{ where }\; (\rho(h)Z)(x):=\sum_{p=0}^{k-1}c_p\ad(x)^p\rho(h)Z,$$
and the coefficients  $c_j$ are the ones given by the BCH formula. In particular, if $\XC$ is a linear vector field on $H$ and $\DC$ a derivation of $\fu$, then $\XC\times\DC$ is a linear vector field on $H\times_{\rho}\fu$ and we can consider a LCS on $H\times_{\rho}\fu$ is given by 
\begin{flalign*}
\label{semi}
&&\, \left\{
\begin{array}{l}
\dot{h}=\XC(h)+\sum_{j=1}^mu_jY_j(h)\\
\dot{x}=\DC x+\sum_{j=1}^mu_j(\rho(h)Z_j)(x).
\end{array}
\right. &&\hspace{-1cm}\left(\Sigma_{H\times_{\rho}\fu}\right).
\end{flalign*}

Our aim in this section is to study the central chain recurrent set of $\Sigma_{H\times_{\rho}\fu}$. In order to do that, we decompose $\fu$ as follows:

Let us consider the central series $\fu:=\fu^1\supset \fu^2\supset\cdots\supset\fu^k\supset\fu^{k+1}=\{0\}$, where 
$$\fu^{i+1}=[\fu^i, \fu], \hspace{.5cm}\mbox{ for }\hspace{.5cm}i\in\{1, \ldots, k\},$$
and for any $i\in\{1, \ldots, k\}$ let $V_i\subset \fu^i$ to be a vector subspace that complements $\fu^{i+1}$ in $\fu^i$, that is, 
$$
V_i\oplus\fu^{i+1}=\fu^i.\hspace{1cm}\mbox{ In particular, }\;\fu=\bigoplus_{l=i}^kV_l, \;\;\mbox{ for }\;\;i\in\{1, \ldots, k\}.
$$
For any $x\in\fu$, we write $x=(x^1, \ldots, x^k)$ to emphasize the decomposition into the components $V_i$-components of $x$. Any linear map $T:\fu\rightarrow\fu$ satisfying $T\fu^i\subset\fu^i$, for $i=1, \ldots, k$  have a nice block-decomposition onto the $V_i$-components. In fact, one can write  
\begin{equation}
\label{block}
T=\left(\begin{array}{ccccc}
T_{11} &     0    &     0    & \cdots &    0    \\
T_{21} & T_{22} &     0    & \cdots &    0    \\
T_{31} & T_{32} & T_{33} & \cdots &    0    \\
\vdots   & \vdots   & \vdots   & \ddots &  \vdots \\
T_{k1} & T_{k2} & T_{k3} & \cdots & T_{kk}
\end{array}\right), \;\;\mbox{ where }\;\;T_{ij}:V_j\rightarrow V_i\;\;\mbox{ is a linear map}.
\end{equation}

Derivations and automorphisms of $\fu$ have the block decomposition in (\ref{block}). For the particular case of inner derivations $\ad(x)$ for $x\in\fu$, the previous block decomposition have zeros on the diagonal. 

Using the block decomposition of the derivation $\DC$ associated with the second equation, the authors showed in \cite[Theorem 3.2]{DSAy1} that the solution of the second equation of $\Sigma_{H\times_{\rho}\fu}$ associated with a control function $u\in\UC$ and starting at a point $(h, x)\in H\times_{\rho}\fh$, with $x=(x^1, \ldots, x^k)$, are given component-wise as

	\begin{equation}
	\label{solutiondirect}
	x_{t, u, h}^i=\int_0^t\rme^{(t-s)\DC_{ii}}G^i\left(x^1_{s, u, h}, \ldots, x^{i-1}_{s, u, h}; Z_{s, u, h}\right)ds+\rme^{t\DC_{ii}}x^i, \;\;\mbox{ for }\;\;i=1, \ldots, k,
	\end{equation}

where $G^i:V_1\times\cdots\times V_{i-1}\times \fu\rightarrow V_i$ are continuous maps and $Z_{t, u, h}:=\rho(h_{t, u})\left(\sum_{j=1}^mu_j(t)Z_j\right)$, with $s\mapsto h_{s, u}$ is the first component of the solution of $\Sigma_{H\times_{\rho}\fu}$ associated with the control $u\in\UC$ and starting at $h$.

The next result analyze the set projection of the set containing all the pieces of trajectories of controlled $(\varepsilon, \tau)$-chains.

\begin{proposition}
\label{auxiliar}
For any  $i\in\{1, \ldots, k\}$, we have that 
$$\prod_{p=1}^{i-1} \pi_p(E_{\varepsilon, \tau})\hspace{.5cm}\mbox{ bounded }\hspace{.5cm}\implies\hspace{.5cm}\prod_{p=1}^{i-1} \pi_p\left(N_{\phi}(E_{\varepsilon, \tau})\right)\hspace{.5cm}\mbox{ bounded.}$$

\end{proposition}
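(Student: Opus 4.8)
The plan is to read off the conclusion from the component-wise solution formula (\ref{solutiondirect}), exploiting the decisive fact that every trajectory piece contributing to $N_{\phi}(E_{\varepsilon,\tau})$ runs for time at most $2\tau$. Over such a bounded interval all the exponentials $\rme^{t\DC_{pp}}$ and $\rme^{(t-s)\DC_{pp}}$ occurring in (\ref{solutiondirect}) are uniformly bounded in operator norm, \emph{regardless of the spectrum of} $\DC_{pp}$. Consequently no hyperbolicity is needed at this stage, and the statement reduces to a finite-time induction on the component index $p$ running from $1$ to $i-1$.

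First I would record that the driving term is uniformly bounded. Since $H$ is compact, $\rho(H)$ is a compact subgroup of $\mathrm{Aut}(\fu)$, and as $\Omega$ is compact there is a constant $C_0>0$ with $\|Z_{s,u,h}\|=\|\rho(h_{s,u})(\sum_{j=1}^m u_j(s)Z_j)\|\le C_0$ for all $s\ge 0$, all $u\in\UC$ and all $h\in H$. I would then prove, by induction on $p\in\{1,\dots,i-1\}$, that $\pi_p(N_{\phi}(E_{\varepsilon,\tau}))$ is bounded. An arbitrary element of this set equals $x^p_{t,u,h}$ for some $t\in[0,2\tau]$, some initial point $(h,x)\in E_{\varepsilon,\tau}$ with $\pi_p(h,x)=x^p\in\pi_p(E_{\varepsilon,\tau})$, and some $u\in\UC$. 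By (\ref{solutiondirect}) this equals $\rme^{t\DC_{pp}}x^p$ plus the integral $\int_0^t\rme^{(t-s)\DC_{pp}}G^p(x^1_{s,u,h},\dots,x^{p-1}_{s,u,h};Z_{s,u,h})\,ds$. The first summand is bounded because $t\in[0,2\tau]$ and $x^p$ ranges over the bounded set $\pi_p(E_{\varepsilon,\tau})$ (the hypothesis). For the integral, the key remark is that for $s\in[0,t]\subseteq[0,2\tau]$ each $x^q_{s,u,h}$ with $q<p$ is precisely the $\pi_q$-component of $\phi(s,(h,x),u)\in N_{\phi}(E_{\varepsilon,\tau})$, hence lies in $\pi_q(N_{\phi}(E_{\varepsilon,\tau}))$, which is bounded by the inductive hypothesis (for $p=1$ the collection of lower components is empty and $G^1$ depends only on $Z_{s,u,h}$, giving the base case directly). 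Combining this with $\|Z_{s,u,h}\|\le C_0$ and the continuity of $G^p$, which carries bounded sets to bounded sets, makes the integrand uniformly bounded; integration over an interval of length at most $2\tau$ then yields a uniform bound on $x^p_{t,u,h}$. This closes the induction, and boundedness of $\prod_{p=1}^{i-1}\pi_p(N_{\phi}(E_{\varepsilon,\tau}))$ follows at once.

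The hard part is bookkeeping rather than analysis: one must notice that the lower components $x^q_{s,u,h}$ appearing inside the integrand are themselves points of $N_{\phi}(E_{\varepsilon,\tau})$, so that the induction on $p$ genuinely closes instead of becoming circular; the block-triangular shape (\ref{block}) of $\DC$ is exactly what guarantees that the $p$-th component is driven only by components of index $<p$. A secondary point requiring care is the uniform bound on $Z_{s,u,h}$, which rests on the boundedness of $\rho(H)$; this is where the compactness of $H$ (the standing assumption throughout this section) enters, since for noncompact $H$ the $H$-components of the trajectory pieces, and hence $\rho(h_{s,u})$, could be unbounded. I would finally emphasize that hyperbolicity of $\DC$ plays no role here precisely because the pieces of trajectory have bounded duration; it is only in controlling the jump set $E_{\varepsilon,\tau}$ itself---an infinite-time phenomenon---that hyperbolicity will be needed.
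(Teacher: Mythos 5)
Your proof is correct and follows essentially the same route as the paper's: both use the component-wise solution formula (\ref{solutiondirect}), the uniform bound on $Z_{s,u,h}$ coming from compactness of $H\times\UC$, and an induction on the component index in which boundedness of the lower components of $N_{\phi}(E_{\varepsilon,\tau})$ feeds the continuity of $G^p$ over the finite time horizon $[0,2\tau]$. Your write-up is in fact somewhat more explicit than the paper's (which leaves the inductive step and the $\rme^{t\DC_{pp}}x^p$ term implicit), but there is no substantive difference in method.
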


\begin{proof}  Since, 
 $$x^p_{t, u, h}=\int_0^t\rme^{(t-s)\DC_{pp}}G^p\left(x^1_{s, u, h}, \ldots, x^{p-1}_{s, u, h}; Z_{s, u, h}\right)ds+\rme^{t\DC_{pp}}x^p,$$
we get, 
 $$\pi_p\left(N_{\phi}(E_{\varepsilon, \tau})\right)=\{x^{p}_{s, u, h}, \hspace{.2cm}u\in\UC, \hspace{.2cm}s\in [0, 2\tau], \hspace{.2cm}h\in H\hspace{.2cm}\mbox{ with }\hspace{.2cm}x^{p}_{0, u, h}\in \pi_p(E_{\varepsilon, \tau})\},$$
implying that $\overline{\pi_p\left(N_{\phi}(E_{\varepsilon, \tau})\right)}$ is a compact set as soon as
 $$G^p:\pi_1\left(N_{\phi}(E_{\varepsilon, \tau})\right)\times\cdots\times \pi_{p-1}\left(N_{\phi}(E_{\varepsilon, \tau})\right)\times \ZC\rightarrow V_p,$$
 is a bounded map, where $\ZC:=\{Z_{h, u}, h\in H, u\in\UC\}$. However, for $p=1$ it holds that  $G^1(\ZC)$ is a compact set, by the continuity of $G^1$, and the compactness of $H\times\UC$, implying the claim.

\end{proof}

In what follows, we use the block decomposition to show that the product in $\fu$ also has a nice decomposition into the $V_i$-components. Such knowledge will be necessary in order to prove the boundedness of the chain control set of $\Sigma_{H\times_{\rho}\fu}$. In order to do that, we prove the next lemma, which is a generalization of \cite[Lemma 3.1]{DSAy1}.

\begin{lemma}
\label{derivations}
For any $p\in\{1, \ldots, k-1\}$ and $x_1, \ldots, x_p\in\fu$, the maps $B^p_{ij}(x_1, \ldots, x_p):V_j\rightarrow V_i$ associated with the block decomposition of the derivation $\ad(x_1)\circ\cdots\circ\ad(x_p)$, satisfies 
	$$B^p_{ij}(x_1, \ldots, x_p)=\left\{\begin{array}{cc} 0 & \;\mbox{ for }\;i<p+j\\B_{ij}^p\left(x_1^1, \ldots, x_1^{i-j-p+1}, \ldots, x_p^1, \ldots, x_p^{i-j-p+1}\right) & \;\mbox{ for }\;i\geq p+j \end{array}\right.,$$
	when $x_i=(x_i^1, \ldots, x_i^k)$ for $i=1, \ldots, p$.
\end{lemma}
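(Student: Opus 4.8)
The plan is to reduce everything to a single bookkeeping principle: the lower central series is multiplicatively filtered, meaning $[\fu^m, \fu^q]\subset\fu^{m+q}$ for all $m, q\geq 1$. I would first establish this by induction on $m$. The base case $m=1$ is just the definition, since $\fu^{q+1}=[\fu^q, \fu]=[\fu, \fu^q]$; the inductive step follows from the Jacobi identity applied to $[\fu^{m+1}, \fu^q]=[[\fu^m, \fu], \fu^q]$, splitting it into $[[\fu^m, \fu^q], \fu]$ and $[\fu^m, [\fu, \fu^q]]$ and invoking the inductive hypothesis on each piece. The immediate consequence I need is that for a degree-$m$ element $z\in V_m\subset\fu^m$, the inner derivation $\ad(z)$ raises filtration degree by at least $m$, that is $\ad(z)\fu^q\subset\fu^{m+q}$.

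Next I would expand the composition multilinearly over the graded components. Writing $x_r=\sum_{m=1}^k x_r^m$ with $x_r^m\in V_m$, bilinearity of the bracket gives
$$\ad(x_1)\circ\cdots\circ\ad(x_p)=\sum_{m_1, \ldots, m_p}\ad(x_1^{m_1})\circ\cdots\circ\ad(x_p^{m_p}).$$
By the filtration estimate, the summand indexed by $(m_1, \ldots, m_p)$ sends $V_j\subset\fu^j$ into $\fu^{j+m_1+\cdots+m_p}$, since applying the inner derivations one at a time raises the filtration degree successively by $m_p, m_{p-1}, \ldots, m_1$. This single observation yields both halves of the statement by elementary degree counting.

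For the vanishing claim, when $i<p+j$, note that every index satisfies $m_r\geq 1$, so each summand sends $V_j$ into $\fu^{j+p}$, and hence so does the full composite; its $V_i$-component is therefore zero whenever $i<j+p$, giving $B^p_{ij}=0$. For the dependence claim, when $i\geq p+j$, the $V_i$-component of a summand can be nonzero only if $j+\sum_r m_r\leq i$, i.e. $\sum_r m_r\leq i-j$. If some index $m_{r_0}\geq i-j-p+2$, then, since the remaining $p-1$ indices are each at least $1$, we obtain $\sum_r m_r\geq(i-j-p+2)+(p-1)=i-j+1>i-j$, so that summand contributes nothing to the $V_i$-block. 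Hence $B^p_{ij}(x_1, \ldots, x_p)$ only sees the components $x_r^1, \ldots, x_r^{i-j-p+1}$ of each $x_r$, which is precisely the asserted dependence.

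The argument is essentially all bookkeeping; the only genuine input is the filtration estimate $[\fu^m, \fu^q]\subset\fu^{m+q}$, and the main place to be careful is the degree count in the dependence claim, namely keeping straight that composing $p$ inner derivations of degrees $m_1, \ldots, m_p$ raises filtration degree by the sum $\sum_r m_r$ rather than merely by $p$, and that isolating the $V_i$-block forces $\sum_r m_r\leq i-j$. The case $p=1$ recovers \cite[Lemma 3.1]{DSAy1}, as expected.
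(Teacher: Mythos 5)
Your argument is correct, and it reaches the conclusion by a genuinely different organization than the paper. The paper proves the lemma by induction on $p$: the base case $p=1$ uses $\ad(x^l)V_j\subset\fu^{j+l}$, and the inductive step composes block matrices via $B^{p+1}_{ij}=\sum_l B^p_{il}B_{lj}(x_{p+1})$ and tracks which blocks vanish and which components survive through the sum. You instead expand the whole composite multilinearly into summands $\ad(x_1^{m_1})\circ\cdots\circ\ad(x_p^{m_p})$ and settle both claims with a single degree count, using that such a summand maps $V_j$ into $\fu^{j+m_1+\cdots+m_p}$; the vanishing claim follows from $\sum_r m_r\geq p$, and the dependence claim from the observation that $m_{r_0}\geq i-j-p+2$ forces $\sum_r m_r>i-j$ and hence kills the $V_i$-block. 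The essential input is the same in both proofs, namely the filtration estimate that $\ad$ of a degree-$m$ element raises central-series degree by at least $m$; the paper asserts the relevant instance without proof, whereas you derive $[\fu^m,\fu^q]\subset\fu^{m+q}$ from the Jacobi identity, which is a worthwhile addition. What your version buys is the elimination of the induction on $p$ and its index bookkeeping (where the paper's displayed formulas in the inductive step in fact contain some typographical slips in the indices); what the paper's version buys is that it only ever manipulates the block decomposition itself, never the graded expansion of the operator, which keeps it closer in form to the $p=1$ statement of \cite[Lemma 3.1]{DSAy1} it generalizes.
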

\begin{proof}
	Let us prove the lemma by induction on $p\in\{1, \ldots, k-1\}$. Since $V_i\subset \fu^i$, it holds that
 $$\forall x^l\in V_l, \hspace{1cm}\ad(x^l) V_j\subset\fu^{j+l}=\bigoplus_{q={j+l}}^kV_q.$$ 
 Therefore, $B_{ij}(x^l)=0$ for any $x^l\in V_l$ if $i<l+j$ and hence,
$$\ad(x)=\sum_{l=1}^{k-1}\ad(x^l)\;\;\; \implies \;\;\;B_{ij}(x)=\left\{\begin{array}{cc} 0 & \;\mbox{ for }\;i<j+1\\ B_{ij}\left(x^1, \cdots, x^{i-j}\right) & \;\mbox{ for }\;i\geq j+1 \end{array}\right.,$$
showing the result for $p=1$. Let us assume that the lemma holds for all $i\leq p$, with $p\in\{1, \ldots k-1\}$. Since, 
$$B^{p+1}_{ij}(x_1, \ldots, x_p, x_{p+1})=\sum_{l=1}^kB^p_{il}(x_1, \ldots, x_p)B_{lj}( x_{p+1}),$$
and by the inductive hypothesis,
$$B_{lj}(x_{p+1})=\left\{\begin{array}{cc} 0 & \;\mbox{ for }\;l<1+j\\B_{il}\left(x_{p+1}^1, \ldots, x_{p+1}^{i-l}\right) & \;\mbox{ for }\;l\geq 1+j \end{array}\right.\hspace{.5cm}\mbox{ and }$$
$$\;\;B^p_{il}(x_1, \ldots, x_p)=\left\{\begin{array}{cc} 0 & \;\mbox{ for }\;i<p+l\\B_{lj}^p\left(x_1^1, \ldots, x^{l-j-p+1}_1, \ldots, x_p^1, \ldots, x^{l-j-p+1}_p\right) & \;\mbox{ for }\;i\geq p+l \end{array}\right.,$$
we conclude that $B^{p+1}_{ij}(x_1, \ldots, x_p, x_{p+1})=0$ for $i<(p+1)+j$ and for $i\geq(p+1)+j$,
$$B^{p+1}_{ij}(x_1, \ldots, x_{p+1})=\sum_{p+j\leq l\leq i-1}B_{il}^p\left(x_1^1, \ldots, x^{l-j-p+1}_1, \ldots, x_p^1, \ldots, x^{l-j-p+1}_p\right)B_{lj}\left(x_{p+1}^1, \ldots, x^{l-j-p+1}_{p+1}\right),$$
which only depends on $x_1^1, \ldots, x^{i-j-(p+1)+1}_1, \ldots, x_{p+1}^1, \ldots, x^{i-j-(p+1)+1}_{p+1}$. Therefore, 
$$B^{p+1}_{ij}(x_1, \ldots, x_{p+1})=B^{p+1}_{ij}\left(x_1^1, \ldots, x^{i-j-(p+1)+1}_1, \ldots, x_{p+1}^1, \ldots, x^{i-j-(p+1)+1}_{p+1}\right),$$
concluding the proof.
\end{proof}
\bigskip

By Lemma \ref{derivations}, for any $x_1, \ldots, x_p, y\in\fu$ it holds that 
$$\left(\ad(x_1)\circ\cdots\circ\ad(x_p)y\right)_i=\sum_{j=1}^{i-p}B_{ij}^p\left(x_1^1, \ldots, x_1^{i-j-p+1}, \ldots, x_p^1, \ldots, x_p^{i-j-p+1}\right)y^j.$$
In particular, the $V_i$-component of the bracket
$$[\underbrace{x, [x, \cdots [x}_{r_1}, [\underbrace{y, [y, \cdots [y}_{s_1}, \cdots [\underbrace{x, [x, \cdots [x}_{r_n}, [\underbrace{y, [y, \ldots y}_{s_n}]] \cdots]],$$
depends continuously on the first $i-N$ components of $x$ and $y$, where $N=r_1+s_1+\cdots+r_n+s_n$, proving the following:

\begin{proposition}
\label{product}
For any $x, y\in \fu$, the $V_i$-component of the product $x*y$ satisfies
$$(x*y)^1=x^1+y
^1\hspace{.5cm}\mbox{ and }\hspace{.5cm}(x*y)^i=x^i+y^i+H^i\left(x^1, \ldots, x^{i-1},y^1, \ldots, y^{i-1}\right),$$
where $x=(x^1, \ldots, x^k)$ and $y=(y^1, \ldots, y^k)$ and $H^i:\left(V_1\times\cdots\times V_{i-1}\right)^2\rightarrow V_i$, is a continuous map.
\end{proposition}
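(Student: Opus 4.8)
The plan is to read off each $V_i$-component directly from the BCH series $x*y=c(X,Y)$, feeding it into the consequence of Lemma \ref{derivations} established immediately before the statement. Since $\fu$ is nilpotent the series $c(X,Y)=X+Y+\frac{1}{2}[X,Y]+\cdots$ is a \emph{finite} sum, and after rewriting every Lie monomial in right-normed form (repeated application of the Jacobi identity), each term beyond $X+Y$ becomes a linear combination of iterated brackets of exactly the shape $[\underbrace{x,\cdots}_{r_1},[\underbrace{y,\cdots}_{s_1},\cdots]]$ displayed above, each involving some number $N=r_1+s_1+\cdots+r_n+s_n\geq 2$ of factors.

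First I would isolate the linear part. The projection $z\mapsto z^i$ onto $V_i$ is a linear map, so $(X+Y)^i=x^i+y^i$ for every $i$; it therefore remains only to control the contribution of the bracket terms. For $i=1$: every bracket term with $N\geq 2$ factors lies in $\fu^2=\bigoplus_{l\geq 2}V_l$, and since $V_1\oplus\fu^2=\fu^1=\fu$ this meets $V_1$ trivially, so its $V_1$-component vanishes. (Equivalently, the displayed formula for $(\ad(x_1)\circ\cdots\circ\ad(x_p)y)_i$ has an empty sum over $j$ when $i=1$ and $p=N-1\geq 1$.) Hence $(x*y)^1=x^1+y^1$.

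For $i\geq 2$ the key input is the paragraph following Lemma \ref{derivations}: the $V_i$-component of a bracket term with $N$ factors depends continuously only on the components of $x$ and $y$ of index $i-N$ or smaller, hence — because $N\geq 2$ — only on components among $x^1,\dots,x^{i-1}$ and $y^1,\dots,y^{i-1}$. Summing the finitely many such contributions (finiteness coming from nilpotency, together with the BCH coefficients) defines a continuous map $H^i:(V_1\times\cdots\times V_{i-1})^2\to V_i$ for which $(x*y)^i=x^i+y^i+H^i(x^1,\dots,x^{i-1},y^1,\dots,y^{i-1})$, which is the claim.

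The only genuinely delicate point is the reduction of an arbitrary BCH monomial to the right-normed brackets to which Lemma \ref{derivations} applies; this is the standard Jacobi-identity rewriting, and once it is granted everything else is bookkeeping with finitely many continuous (indeed polynomial) terms. One should also be slightly careful with the off-by-one in the index count, but since the minimal relevant bracket $[X,Y]$ already has $N=2$, every bracket term depends on components of index at most $i-1$, so the dependence asserted for $H^i$ is exactly as stated.
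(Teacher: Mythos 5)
Your proof is correct and follows essentially the same route as the paper, which obtains the proposition directly from the consequence of Lemma \ref{derivations} stated just before it: every BCH term beyond $X+Y$ is a right-normed bracket with $N\geq 2$ factors, whose $V_i$-component depends continuously only on components of index at most $i-1$ (and vanishes for $i=1$ since such terms lie in $\fu^2$). Your extra remarks on the reduction to right-normed brackets and the off-by-one in the index bound are sensible precautions, but they do not change the argument.
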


Before we prove the main result of this section, let us introduce some notations. For that, let us assume that the derivation $\DC$ has only eigenvalues with nonzero real parts. By the block-matrix decomposition of $\DC$, the same property is true for the linear maps $\DC_{ii}$, that is, $\DC_{ii}$ have only eigenvalues with nonzero real parts. 
Therefore, for any $i\in\{1, \ldots, k\}$ we can decompose $V_i=V_i^+\oplus V_i^-$, where $V_i^{+} (\mbox{resp. }V_i^-)$ is the sum of the real generalized eigenspaces of $\DC_{ii}$ associated with eigenvalues with positive (resp. negative) real parts. As a consequence, for any norm $|\cdot|$ on $\fu$ there exist constants $\kappa_i, \mu_i>0$ such that 
$$|\rme^{t\DC_{ii}}\pi_i^-(x^i)|\leq \kappa_i\rme^{-t\mu_i}|\pi_i^-(x^i)|\;\;\;\;\mbox{ and }\;\;\;\;|\rme^{-t\DC_{ii}}\pi_i^+(x^i)|\leq \kappa_i\rme^{-t\mu_i}|\pi_i^+(x^i)|, \;\;\mbox{ for any }\; t> 0, \;x^i\in V_i,$$
where $\pi_i^{\pm}:V_i\rightarrow V_i^{\pm}$ are the projections associated with the decomposition $V_i=V_i^+\oplus V_i^-$.

\begin{theorem}
	\label{central}
 Under the previous assumptions, let us assume that $\DC$ is hyperbolic and that 
 $H$ is a compact Lie group. Then, there exists $\varepsilon_0, \tau_0>0$ such that $E_{\varepsilon, \tau}$ is a bounded set for all $\varepsilon\in(0, \varepsilon_0)$ and $\tau\geq\tau_0$.
\end{theorem}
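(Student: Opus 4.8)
The plan is to argue by induction on the component index $i\in\{1,\ldots,k\}$ that the projection $\pi_i(E_{\varepsilon,\tau})$ onto the $V_i$-component of the $\fu$-factor is bounded, uniformly in the chains, for all $\varepsilon<\varepsilon_0$ and $\tau\geq\tau_0$; since $H$ is compact its $H$-factor is automatically bounded, and $\fu=\bigoplus_{i=1}^k V_i$, so this gives the boundedness of $E_{\varepsilon,\tau}$. The driving mechanism is the hyperbolic splitting $V_i=V_i^+\oplus V_i^-$ with the decay estimates above. Along one piece of a controlled $(\varepsilon,\tau)$-chain, of length $T_j\in[\tau,2\tau]$ and starting at a jump $x_j$, formula (\ref{solutiondirect}) gives
$$\pi_i(w_j)=I_j+\rme^{T_j\DC_{ii}}\,\pi_i(x_j),\qquad w_j:=\phi(T_j,x_j,u_j),$$
where $I_j$ is the integral term. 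By the inductive hypothesis $\pi_p(E_{\varepsilon,\tau})$ is bounded for $p<i$, so Proposition \ref{auxiliar} makes $\pi_p(N_\phi(E_{\varepsilon,\tau}))$ bounded; as $G^i$ is continuous and $\ZC$ is compact (here the compactness of $H$ enters), $G^i$ ranges in a bounded set along every such piece and hence $|I_j|\leq C_i$ for a constant $C_i$ independent of the chain. For $i=1$ this holds directly, since $G^1$ depends only on $Z\in\ZC$.

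The second ingredient translates the jump inequality into a componentwise one. If $d(w_j,x_{j+1})<\varepsilon$, then by left-invariance the $\fu$-increment $c$ passing from $w_j$ to $x_{j+1}$ lies in a fixed neighborhood of $0\in\fu$ (using also that $\rho(H)$ is a compact, hence equicontinuous, family of automorphisms), so its components $c^1,\ldots,c^i$ have size at most $\omega(\varepsilon)$ with $\omega(\varepsilon)\to0$. Writing the passage as a product and applying Proposition \ref{product}, whose map $H^i$ vanishes when its second block of arguments is $0$, together with the boundedness of the lower components of $w_j$, yields
$$\big|\pi_i(x_{j+1})-\pi_i(w_j)\big|\leq\Omega_i(\varepsilon),\qquad\Omega_i(\varepsilon)\to0\quad(\varepsilon\to0).$$

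Now I would fix $\lambda\in(0,1)$, choose $\tau_0$ so that $\kappa_i\rme^{-\tau_0\mu_i}\leq\lambda$ for every $i$ (a finite maximum over $i\in\{1,\ldots,k\}$), and fix any $\varepsilon_0$ small enough for the previous step. Projecting the two displays above onto $V_i^-$ and using the forward decay estimate gives the contraction recursion
$$|\pi_i^-(x_{j+1})|\leq\lambda\,|\pi_i^-(x_j)|+\big(C_i+\Omega_i(\varepsilon_0)\big),$$
whereas solving for $\pi_i(x_j)$ on $V_i^+$ and using the backward decay estimate gives
$$|\pi_i^+(x_j)|\leq\lambda\,|\pi_i^+(x_{j+1})|+\big(C_i'+\lambda\,\Omega_i(\varepsilon_0)\big).$$
To close the induction, fix a base point $p\in E$. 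For a jump $x_j$ occurring in a chain from $x\in E$ to $y\in E$, concatenate a chain from $p$ to $x$ (inside $E$) with the prefix from $x$ to $x_j$; telescoping the first recursion along the resulting chain from $p$ to $x_j$ gives $|\pi_i^-(x_j)|\leq|\pi_i^-(p)|+(C_i+\Omega_i(\varepsilon_0))/(1-\lambda)=:R_i^-$, uniformly, because $\lambda<1$ erases the dependence on the (arbitrary) number of steps. Symmetrically, concatenating the suffix from $x_j$ to $y$ with a chain from $y$ to $p$ and telescoping the second recursion bounds $|\pi_i^+(x_j)|\leq R_i^+$. Thus $|\pi_i(x_j)|\leq R_i^-+R_i^+$ for every jump, so $\pi_i(E_{\varepsilon,\tau})$ is bounded and the induction advances.

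The step I expect to be the main obstacle is controlling the unstable block $V_i^+$: forward chains only contract the stable block, so the unstable block must be handled by reading the chain backwards, that is, by treating $V_i^+$ as the stable block of the inverse flow and invoking chain transitivity of $E$ in the reverse direction. The accompanying difficulty is keeping all constants ($C_i$, $C_i'$, $\Omega_i$, and the telescoped geometric sums) uniform over chains of unbounded length; this is exactly where the contraction factor $\lambda<1$ and the finiteness of the central series—so that the induction terminates and $\tau_0$ is a finite maximum—are indispensable.
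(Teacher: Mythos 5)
Your proposal is correct and follows essentially the same route as the paper: induction on the $V_i$-components of the lower central series, with Propositions \ref{auxiliar} and \ref{product} supplying the uniform constants, and the hyperbolic splitting $V_i=V_i^+\oplus V_i^-$ handled by telescoping the stable block forward and the unstable block backward along a chain anchored at a fixed point of $E$ (the paper anchors at $(e,0)$ and concatenates to a chain from $(e,0)$ to itself through the given jump). The only cosmetic difference is that the paper bounds the jump increment by the fixed constant $c=\max_{h\in H}|\rho(h)|$ rather than a modulus $\Omega_i(\varepsilon)\to 0$, which is all that is needed for the geometric-series estimate.
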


\begin{proof} 
Define then $\tau_0>0$ to be a positive real number satisfying, 
$$\forall i\in\{1, \ldots, k\}, \hspace{1cm}\kappa_i\rme^{-\tau_0\mu_i}<1,$$
and $\varepsilon_0>0$ such that 
$$d((e, 0), (h, x))<\varepsilon_0\hspace{.5cm}\implies \hspace{.5cm} |x|<1,$$
where $d$ is a left-invariant metric on $H\times_{\rho}\fu$.

Let us then fix $\varepsilon\in(0, \varepsilon_0)$ and $\tau\geq\tau_0$ and consider the set $E_{\varepsilon, \tau}$. Since $H$ is a compact group, we only have to show that the set $\pi_{i}\left(E_{\varepsilon, \tau}\right)$ is a bounded subset of $V_i$ for any $i=1, \ldots, k$, where 
	$$\pi_{i}:H\times_{\rho}\fu\rightarrow V_i, \;\;\; (h, (x^1, \ldots, x^k))\mapsto x^i,$$
are the projections onto the $V_i$-components. Let $x^i\in \pi_{i}\left(E_{\varepsilon, \tau}\right)$, and consider $(h, x)\in E_{\varepsilon, \tau}$ with $x^i=\pi_i(h, x)$. By definition, there exists a controlled $(\varepsilon, \tau)$-chain from $(e, 0)\in E$ to itself given by $n+m\in\N$, $(h_0, x_0), (h_1, x_1), \ldots, (h_{n+m}, x_{n+m})\in H\times_{\rho}\fu$ with $(h_0, x_0)=(h_{n+m}, x_{n+m})=(e, 0)$, $T_1, \ldots, T_{n+m}\geq\tau$ and $u_0, \ldots, u_{n+m}\in\UC$ such that $(h_n, x_n)=(h, x)$ and
$$d\Bigl((h_{j+1}, x_{j+1}), (h_{T_j, u_j}, x_{T_j, u_j, h_j})\Bigr)<\varepsilon, \hspace{.5cm} i=0, \ldots, n-1,$$
where $s\in\R\mapsto (h_{s, u_j}, x_{s, u_j, h_j})\in H\times_{\rho}\fu,$
stands for the solution of the $\Sigma_{H\times_{\rho}\fu}$ starting at $(h_j, x_j)$. By the left-invariance of the metric, we get that 
$$d\Bigl((h_{j+1}, x_{j+1}), (h_{T_j, u_j}, x_{T_j, u_j, h_j})\Bigr)=d\Bigl((e, 0), (h_{j+1}, x_{j+1})^{-1}(h_{T_j, u_j}, x_{T_j, u_j, h_j})\Bigr)$$
$$=d\Bigl((e, 0), (h_{j+1}^{-1}, -\rho(h_{j+1}^{-1})x_{j+1})(h_{T_j, u_j}, x_{T_j, u_j, h_j})\Bigr)=d\Bigl((e, 0), (h_{j+1}^{-1}h_{T_j, u_j},-\rho(h^{-1}_{j+1})x_{j+1}*\rho(h_{j+1}^{-1})x_{T_j, u_j, h_j})\Bigr)$$
$$=d\Bigl((e, 0), (h_{j+1}^{-1}h_{T_j, u_j},\rho(h^{-1}_{j+1})\left(-x_{j+1}*x_{T_j, u_j, h_j})\right)\Bigr),$$
where for the last equality we used that $\rho(h_{j+1}^{-1})\in\mathrm{Aut}(\fu)$. By the choices made at the beginning of the proof, we obtain that 
$$\left|-x_{j+1}*x_{T_j, u_j, h_j}\right|\leq c\underbrace{\left|\rho(h^{-1}_{j+1})\left(-x_{j+1}*x_{T_j, u_j, h_j}\right)\right|}_{\leq 1}\leq c,$$
where $c:=\max_{h\in H}|\rho(h)|$. Consequently, the same inequality holds for each $V_i$-components, that is, 
$$\Bigl|\Bigl(x_{j+1}*x_{T_j, u_j, h_j}\Bigr)^i\Bigr|<c, \hspace{.5cm}\forall i=1, \ldots, k.$$
By Proposition \ref{product}, we have that 
$$  \Bigl(-x_{j+1}*x_{T_j, u_j, h_j}\Bigr)^i=-x_{j+1}^i+x^i_{T_j, u_j, h_j}
    +H^i\left(-x_{j+1}^1, \ldots, -x_{j+1}^{i-1}, x_{T_j, u_j, h_j}^1, \ldots, x_{T_j, u_j, h_j}^{i-1}\right),$$
    and hence,
\begin{equation}
    \label{eq}
    -x_{j+1}^i+x^i_{T_j, u_j, h_j}=\Bigl(-x_{j+1}*x_{T_j, u_j, h_j}\Bigr)^i
    -H^i\left(-x_{j+1}^1, \ldots, -x_{j+1}^{i-1}, x_{T_j, u_j, h_j}^1, \ldots, x_{T_j, u_j, h_j}^{i-1}\right).
\end{equation}

{\bf Claim:} If there exists a constant $C_i=C_i(\varepsilon, \tau)$ such that 
$$\left|-x^i_{j+1}+x^i_{T_j, u_j, h_j}\right|\leq C_i, \hspace{.5cm}\mbox{ and }\hspace{.5cm}\left|G^i\left(x^1_{s, u_j, h_j}, \ldots, x^{i-1}_{s, u_j, h_j}; Z_{s, u_j, h_j}\right)\right|\leq C_i,$$
for all $j=0, \ldots n+m-1$, and $x^p_{s, u_j, h_j}\in \pi_p(N_{\phi}(E_{\varepsilon, \tau}))$ for all $p=1, \ldots, i-1$, then 
$$|x^i|\leq \frac{2D_i}{(1-\kappa_i\rme^{-\tau\mu_i})}, \hspace{.5cm}D_i:=C_i\left(1+\frac{\kappa_i}{\mu_i}\right).$$

Since,
$$\left|-x^i_{j+1}+x^i_{T_j, u_j, h_j}\right|\leq C_i\hspace{.5cm}\implies\hspace{.5cm}\left|\pi^-_i\left(-x^i_{j+1}+x^i_{T_j, u_j, h_j}\right)\right|\leq C_i,$$
we get that 
$$\left|\pi_i^-(x_{j+1}^i)\right|\leq  \left|\pi_i^-\left(x^i_{T_j, u_j, h_j}\right)\right|+C_i.$$
Using the expression for the $V_i$-component of the solutions of $\Sigma_{H\times_{\rho}\fu}$ given in (\ref{solutiondirect}) we obtain that 
$$\left|\pi_i^-(x_{j+1}^i)\right|\leq C_i+|\rme^{T^i\DC_{ii}}\pi^-_i(x^i_j)|+\int_0^{T_j}\left|\rme^{(T_j-s)\DC_{ii}}\pi_i^-\left(G_i\left(x^1_{s, u_j, h_j}, \ldots, x^{i-1}_{s, u_j, h_j}; Z_{s, u_j, h_j}\right)\right)\right|ds$$
$$\leq C_i+\kappa_i\rme^{-T_j\mu_i}|\pi^-_i(x^i_j)|+\int_{0}^{T_j}\kappa_i\rme^{-(T_j-s)\mu_i}\left|\pi_i^-\left(G^i\left(x^1_{s, u_j, h_j}, \ldots, x^{i-1}_{s, u_j, h_j}; Z_{s, u_j, h_j}\right)\right)\right|ds$$
$$\leq C_i+\kappa_i\rme^{-T_j^i\mu_i}|\pi^-_i(x^i_j)|+\frac{\kappa_i}{\mu_i}C_i(1-\rme^{-T_j^i\mu_i})\leq C_i\left(1+\frac{\kappa_i}{\mu_i}\right)+\kappa_i\rme^{-\tau\mu_i}|\pi^-_i(x^i_j)|,$$
showing that 
$$\left|\pi_i^-(x_{j+1}^i)\right|\leq D_i+\kappa_i\rme^{-\tau\mu_i}|\pi^-_i(x^i_j)|.$$

Since $x_n^i=x^i$ and $x^i_0=0$, the previous estimate applied $n$-times gives us that,
$$\left|\pi^-_i(x^i)\right|\leq D_i+\kappa_i\rme^{-\tau\mu_i}|\pi^-_i(x^i_{n-1})|\leq D_i+\kappa_i\rme^{-\tau\mu_i}\left(D_i+\kappa_i\rme^{-\tau\mu_i}|\pi^-_i(x^i_{n-2})|\right)$$
$$\leq\cdots\leq D_i\sum_{j=0}^{n-1}(\kappa_i\rme^{-\tau\mu_i})^j\leq D_i\sum_{j=0}^{\infty}(\kappa_i\rme^{-\tau\mu_i})^j=\frac{D_i}{(1-\kappa_i\rme^{-\tau\mu_i})}.$$

On the other hand, 
$$-x^i_{j+1}+x^i_{T_j, u_j, h_j}=-x^i_{j+1}+\int_0^{T_j}\rme^{(T_j-s)\DC_{ii}}G^i\left(x^1_{s, u_j, h_j}, \ldots, x^{i-1}_{s, v_j, h_j}; Z_{s, u_j, h_j}\right)ds+\rme^{T_j\DC_{ii}}x^i_j$$
$$=\rme^{T_j\DC_{ii}}\left(-\rme^{-T_j\DC_{ii}}x^i_{j+1}+\int_0^{T_j}\rme^{-s\DC_{ii}}\left(x^1_{s, u_j, h_j}, \ldots, x^{i-1}_{s, u_j, h_j}; Z_{s, u_j, h_j}\right)ds+x^i_j\right),$$
and hence,
$$C_i\geq \left|\pi^+_1\left(-x^i_{j+1}+\int_0^{T_j}\rme^{(T_j-s)\DC_{ii}}G^i\left(x^1_{s, u_j, h_j}, \ldots, x^{i-1}_{s, u_j, h_j}; Z_{s, u_j, h_j}\right)ds+\rme^{T_i\DC_{ii}}x^i_j\right)\right|$$
$$\hspace{2cm}\geq\kappa_1^{-1}\rme^{T_j\mu_i}\left|\pi^+_i\left(-\rme^{-T_j\DC_{ii}}x^i_{j+1}+\int_0^{T_j}\rme^{-s\DC_{ii}}\left(x^1_{s, u_j, h_j}, \ldots, x^{i-1}_{s, u_j, h_j}; Z_{s, u_j, h_j}\right)ds+x^i_j\right)\right|,$$
which gives us that,
$$|\pi^+_i(x_j^i)|\leq C_i\kappa_i\rme^{-T_j\mu_i}+\left|\rme^{-T_j\DC_{ii}}\pi_i^+(x_{j+1}^i)\right|+\int_0^{T_j}\left|\rme^{-s\DC_{ii}}\pi_i^+\left(G^i\left(x^1_{s, u_j, h_j}, \ldots, x^{i-1}_{s, u_j, h_j}; Z_{s, v_j, g_j}\right)\right)\right|ds$$
$$\leq C_i+\kappa_i\rme^{-T_j\mu_i}|\pi^+_i(x^i_{j+1})|+\int_{0}^{T_j}\kappa_i\rme^{-s\mu_i}\left|\pi_i^+\left(G^i\left(x^1_{s, u_j, h_j}, \ldots, x^{i-1}_{s, u_j, h_j}; Z_{s, u_j, h_j}\right)\right)\right|ds$$
$$\leq C_i+\kappa_i\rme^{-T_j\mu_i}|\pi^+_i(x^i_{j+1})|+\frac{\kappa_i}{\mu_i}C_i(1-\rme^{-T_j\mu_i})\leq D_i+\kappa_i\rme^{-\tau\mu_i}|\pi^+_i(x^i_{j+1})|.$$
Since, $x_n^i=x^i$ and $x_{n+m}^i=0$, we obtain that,
$$\left|\pi^+_i(x^i)\right|\leq D_i+\kappa_i\rme^{-\tau\mu_i}|\pi^+_i(x^i_{n+1})|\leq D_i+\kappa_i\rme^{-\tau\mu_i}\left(D_i+\kappa_i\rme^{-\tau\mu_i}|\pi^+_i(x^i_{n+1})|\right)$$
$$\leq\cdots\leq D_i\sum_{j=0}^{m-1}(\kappa_i\rme^{-\tau\mu_i})^j\leq D_i\sum_{j=0}^{\infty}(\kappa_i\rme^{-\tau\mu_i})^j=\frac{D_i}{(1-\kappa_i\rme^{-\tau\kappa_i})}.$$
Consequently,
$$\left|x^i\right|=\left|\pi^+_i(x^i)+\pi^-_i(x^i)\right|\leq \left|\pi^+_i(x^i)\right|+\left|\pi^-_i(x^i)\right|\leq \frac{2D_i}{(1-\kappa_i\rme^{-\tau\kappa_i})},$$
showing the claim.

\bigskip

We can now prove the result by induction: Since $G^1$ is continuous, the compactness of $H\times\UC$ implies the existence of $M_1>0$ such that 
$$|G^1\left(Z_{t, u, h}\right)|\leq M_1, \;\;\;\mbox{ for all }\;\;\;t\geq 0, h\in H, \;u\in\UC.$$
Moreover, in this case 
$$-x^1_{j+1}+x_{T_j, u_j, h_j}^1=\left(-x_{j+1}*x_{T_j, u_j, h_j}\right)^1,$$
which by our initial choice of $\varepsilon, \tau>0$ implies 
$$\left|-x^1_{j+1}+x_{T_j, u_j, h_j}^1\right|=\left|\left(-x_{j+1}*x_{T_j, u_j, h_j}\right)^1\right|\leq c,$$
and hence, $C_1=\max\{M_1, c\}$ implies that $x^1$ is bounded. By the arbitrariness of the initial $x\in E_{\varepsilon, \tau}$ we obtain that $\pi_1(E_{\varepsilon, \tau})$ is bounded.

Let us now assume that $\pi_p(E_{\varepsilon, \tau})$ is bounded for any $p=1, \ldots, i-1$. By Lemma \ref{auxiliar}   we conclude that the sets
$$\left(\prod_{p=1}^{i-1}\pi_p(N_{\phi}(E_{\varepsilon, \tau}))\right)^2\hspace{.5cm}\mbox{ and }\hspace{.5cm}\prod_{p=1}^{i-1}\pi_p(N_{\phi}(E_{\varepsilon, \tau}))\times \ZC,$$ 
are also bounded. Therefore, by continuity, the maps $H^i$ and $G^i$ are also bounded, implying the existence of a constant $C_i>0$ such that 
$$\left|-x^i_{j+1}+x^i_{T_j, u_j, h_j}\right|\leq C_i, \hspace{.5cm}\mbox{ and }\hspace{.5cm}\left|G^i\left(x^1_{s, u_j, h_j}, \ldots, x^{i-1}_{s, u_j, h_j}; Z_{s, u_j, h_j}\right)\right|\leq C_i,$$
where the first inequality comes from the equation (\ref{eq}). By the previous claim, we conclude that  $x^i$ is bounded, and hence the same is true for $\pi_i(E_{\varepsilon, \tau})$, concluding the proof.
\end{proof}

As a direct corollary, we have the following:

\begin{corollary}
    Under the assumptions of the previous theorem, the chain control set of the LCS $\Sigma_{H\times_{\rho}\fu}$ is a compact subset.
\end{corollary}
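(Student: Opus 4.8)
The plan is to deduce the corollary directly from the boundedness result of Theorem \ref{central} together with the general structural facts about chain control sets established earlier. Recall that $E\subset\bigcap_{\varepsilon,\tau}E_{\varepsilon,\tau}$, so if I can show that $E$ is contained in a single bounded set and that $E$ is closed, compactness will follow at once. Theorem \ref{central} provides $\varepsilon_0,\tau_0>0$ such that $E_{\varepsilon,\tau}$ is bounded for every $\varepsilon\in(0,\varepsilon_0)$ and $\tau\geq\tau_0$. Fixing one such pair, say $\varepsilon=\varepsilon_0/2$ and $\tau=\tau_0$, I obtain a bounded set $E_{\varepsilon,\tau}\subset H\times_\rho\fu$ with $E\subset E_{\varepsilon,\tau}$. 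Since $H$ is compact and $\fu$ (as a manifold) is a Euclidean space, a subset of $H\times_\rho\fu$ is bounded precisely when each projection $\pi_i$ of it into $V_i$ is bounded; the theorem gives exactly this for $i=1,\ldots,k$, so $E$ sits inside a bounded set.

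Next I would invoke the fact, recorded in the remark following Definition \ref{chaincontrol} (citing \cite[Corollary 4.3.12]{FCWK}), that every chain control set is closed. Thus $E$ is a closed subset of the locally compact group $H\times_\rho\fu$. A closed and bounded subset of $H\times_\rho\fu\cong H\times\fu$ is compact, because $H$ is compact and a closed bounded subset of the finite-dimensional vector space $\fu$ is compact by Heine--Borel. Combining closedness with the boundedness from the previous paragraph yields that $E$ is compact.

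The only subtlety worth stating carefully is the passage from ``each $\pi_i(E)$ bounded'' to ``$E$ relatively compact'', which uses that the decomposition $\fu=\bigoplus_{i=1}^k V_i$ identifies the underlying manifold of the nilpotent group with a finite-dimensional real vector space, so that $E\subset H\times_\rho\fu$ being bounded in every coordinate block forces its closure to be compact. There is no real obstacle here: the entire analytic work has already been done in Theorem \ref{central}, and this corollary is a genuinely short consequence. If one wishes to be explicit about uniqueness or about which chain control set $E$ denotes, one can additionally note that by Theorem \ref{unique} (applicable once one checks that the central subgroup of $\XC\times\DC$ on $H\times_\rho\fu$ is the compact group $H\times\{0\}$, since $\DC$ is hyperbolic and therefore has trivial central part), there is a unique chain control set containing the central subgroup, and the argument above shows this distinguished chain control set is compact.
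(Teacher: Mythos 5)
Your proof is correct and follows exactly the route the paper intends: the theorem's boundedness of $E_{\varepsilon,\tau}$ for one admissible pair $(\varepsilon,\tau)$, the inclusion $E\subset E_{\varepsilon,\tau}$, closedness of chain control sets, and Heine--Borel on $H\times\fu$ with $H$ compact. The paper offers no written proof (it calls this a direct corollary), and your added remark identifying the relevant chain control set via Theorem \ref{unique} and the hyperbolicity of $\widehat{\DC}$ is a welcome clarification rather than a deviation.
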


\subsection{Compactness of the chain control set}

This section concludes our results by showing that the compactness of the central subgroup associated with a LCS on a connected Lie group is a necessary and sufficient condition for the compactness of its unique chain control set. Precisely, we have the following:

\begin{theorem}
Let $\Sigma_G$ be an LCS on a connected group $G$. If the central subgroup associated with $\Sigma_G$ is compact, then $\Sigma_G$ admits a unique and compact chain control set.

\end{theorem}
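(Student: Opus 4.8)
The plan is to combine the uniqueness statement of Theorem \ref{unique} with a reduction to the semidirect-product situation of Theorem \ref{central}, transporting the compactness back along the conjugation of Lemma \ref{conj} by means of Proposition \ref{control}. Throughout I write $G^0$ for the central subgroup associated with $\Sigma_G$, which is compact by hypothesis.

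For uniqueness I would simply invoke Theorem \ref{unique}: it produces a chain control set $E$ with $G^0\subset E$, and the argument given there --- connecting an arbitrary chain control set $\widehat{E}$ to $G^0$ by controlled $(\varepsilon,\tau)$-chains in both directions and then appealing to maximality --- in fact shows that \emph{every} chain control set coincides with $E$. Hence $E$ is the unique chain control set of $\Sigma_G$, and the whole problem reduces to proving that $E$ is compact.

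For the reduction I would apply Lemma \ref{conj} to the linear vector field $\XC$, obtaining a compact Lie group $H$, a simply connected nilpotent group $\fu$, and a homomorphism $\psi:G\rightarrow H\times_{\rho}\fu$ conjugating $\XC$ to $\widehat{\XC}\times\widehat{\DC}$ with $\widehat{\DC}$ hyperbolic. From the construction one reads off that $\psi$ is surjective (the $\fu$-component comes from $\fg^{+,-}\cong\fn/\fn_0$, the $H$-component from $G^0/N_0$) and that $\ker\psi=N_0$, which is compact. I would then upgrade this vector-field conjugation into a conjugation of the full control systems: since $\psi$ is a surjective homomorphism, each right-invariant control field $Y^j$ is $\psi$-related to the right-invariant field $\widehat{Y}^j$ on $H\times_{\rho}\fu$ determined by $(d\psi)_e Y^j(e)$, and together with $\XC$ being $\psi$-related to $\widehat{\XC}\times\widehat{\DC}$, the entire right-hand sides are $\psi$-related; uniqueness of ODE solutions then yields the conjugacy relation (\ref{solu}) for the resulting system $\Sigma_{H\times_{\rho}\fu}$.

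Finally, for compactness, observe that the central subgroup of $\widehat{\XC}\times\widehat{\DC}$ is $H\times\{0\}$, because hyperbolicity of $\widehat{\DC}$ kills the $\fu$-part while $H$ is compact. Thus $\Sigma_{H\times_{\rho}\fu}$ has a compact central subgroup, so by Theorem \ref{unique} it has a unique chain control set $F$, which is compact by the corollary to Theorem \ref{central}. Proposition \ref{control} then gives $\psi(E)\subset F$, and since both $F$ and $\ker\psi=N_0$ are compact, the same proposition forces $E$ to be compact, completing the proof. The step I expect to be the main obstacle is precisely the upgrade in the previous paragraph: one must verify carefully that the pushforwards of the control fields are well-defined right-invariant fields on the target (this is where surjectivity of $\psi$ enters) and that the resulting $\Sigma_{H\times_{\rho}\fu}$ genuinely satisfies (\ref{solu}), so that Proposition \ref{control} is applicable.
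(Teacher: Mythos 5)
Your proposal is correct and follows essentially the same route as the paper: uniqueness via Theorem \ref{unique}, reduction to the semidirect product via Lemma \ref{conj}, compactness of the target chain control set via Theorem \ref{central} and its corollary, and transport of compactness back along $\psi$ via Proposition \ref{control}. The additional verifications you flag (surjectivity of $\psi$, compactness of $\ker\psi=N_0$, and the construction of the induced system $\Sigma_{H\times_{\rho}\fu}$ satisfying (\ref{solu})) are details the paper leaves implicit, and your treatment of them is sound.
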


\begin{proof}
    By Theorem \ref{unique}, if $G^0$ is a compact subgroup, there exists a unique control set $E$ that contains $G^0$. Using Proposition \ref{conj}, there exists a homomorphism $\psi:G\rightarrow H\times_{\rho}\fu$, where $H$ is a compact subgroup and $\fu$ a simply connected, connected nilpotent Lie group. Moroever, the linear vector field $\XC$ of $\Sigma_G$ is $\psi$ conjugated to a linear vector field on $H\times_{\rho}\fu$ of the form $\widehat{\XC}\times\widehat{\DC}$, where $\widehat{\DC}$ has only eigenvalues with nonzero real parts. 

    By considering the LCS on $\Sigma_{H\times_{\rho}\fu}$ induced by $\psi$, Theorem \ref{central} implies that the chain control set $F$ of $\Sigma_{H\times_{\rho}\fu}$ is compact, which by Proposition \ref{control} implies that $E$ is a compact subset, concluding the proof.
\end{proof}


\begin{thebibliography}{99}

	\bibitem{DSAy1} 
	\newblock V. Ayala and A. Da Silva, 
	\newblock \emph{Central periodic points of linear systems}, 
	\newblock Journal of Differential Equations, 272 (2021), 310-329.
	
	\bibitem {DSAyGZ}
	\newblock V. Ayala, A. Da Silva and G. Zsigmond, 
	\newblock \emph{Control sets of linear systems on Lie groups.} 
	\newblock Nonlinear Differential Equations and Applications - NoDEA 24 No 8 (2017), 1 - 15.
		
	\bibitem {DSAyPH}
	\newblock V. Ayala, A. Da Silva and P. Jouan,
	\newblock \emph{Jordan Decompositions and the recurrent set of flows of automorphisms},
	\newblock Discrete and Continuous Dynamical Systems, 41 No 4 (2021), 1534-1559.
	
	

	

 
\bibitem{San}
	\newblock V. Ayala and L.A.B. San Martin,
	\newblock \emph{Controllability properties of a class of control systems on Lie groups},
	\newblock  Lecture Notes in Control and Information Sciences 258 (2001), 83 – 92.
	
	\bibitem {AyTi}
	\newblock V. Ayala and J. Tirao, 
	\newblock \emph{Linear control systems on Lie groups and Controllability}, 
	\newblock Eds. G. Ferreyra et al., Amer. Math. Soc., Providence, RI, 1999.


 		
	\bibitem{DS}
	\newblock A. Da Silva, 
	\newblock \emph{Controllability of linear systems on solvable Lie groups},
	\newblock SIAM Journal on Control and Optimization 54 No 1 (2016), 372-390.
	
	
	\bibitem{FCWK} 
	\newblock F. Colonius and C. Kliemann, 
	\newblock \emph{The Dynamics of Control.}
	\newblock Systems $\&$ Control: Foundations $\&$ Applications. Birk\"auser Boston, Inc., Boston, MA, 2000.
		

	
	\bibitem {JPh1}
	\newblock P. Jouan,
	\newblock \emph{Equivalence of Control Systems with Linear Systems on Lie Groups and Homogeneous Spaces.}
	\newblock ESAIM: Control Optimization and Calculus of Variations, 16 (2010) 956-973.


 \bibitem{Markus}
	\newblock L. Markus, 
	\newblock \emph{Controllability of multi-trajectories on Lie groups}, 
	\newblock  Proceedings of Dynamical Systems and Turbulence, Warwick 1980, Lecture Notes in Mathematics 898, 250-265.
 	
	
	
	

\bibitem{SMGr}
	\newblock L. A. B. San Martin, 
	\newblock \emph{Grupos de Lie}, 
	\newblock First Edition, Editora Unicamp, (2016).
	
	
	\bibitem{SM1}
	\newblock L. A. B. San Martin, 
	\newblock \emph{Algebras de Lie}, 
	\newblock Second Edition, Editora Unicamp, (2010).
\end{thebibliography}
\end{document}